\newtheorem{theorem}{Theorem}
\newtheorem{definition}{Definition}
\newtheorem{corollary}[theorem]{Corollary}
\newtheorem{proposition}{Proposition}
\newcommand{\showpmcdots}[3]{
\foreach \x in {1,...,#3} {
	\filldraw (#1,\x) circle (0.1);
	\filldraw (#2,\x) circle (0.1);
}}
\newenvironment{smallsd}[1][7pt]
{\begin{tikzpicture} [x=#1,y=#1,baseline=(current bounding box.center)]
\showpmcdots{0}{3}{4}}
{\end{tikzpicture}}
\newenvironment{sdn}[2][7pt]
{\begin{tikzpicture} [x=#1,y=#1,baseline=(current bounding box.center)]
\showpmcdots{0}{3}{#2}}
{\end{tikzpicture}}
\newcommand{\strandup}[2]{\draw [->] (0,#1) to [out=0,in=180] (3,#2);}
\newcommand{\stranddown}[2]{\draw [->] (3,#2) to [out=180,in=0] (0,#1);}
\newcommand{\doublehor}[2]{\draw [dashed] (0,#1) to (3,#1); \draw [dashed] (0,#2) to (3,#2);}
\newcommand{\singlehor}[1]{\draw [dashed] (0,#1) to (3,#1);}
\newenvironment{smalldsd}[1][7pt]
{\begin{tikzpicture} [x=#1,y=#1,baseline=(current bounding box.center)]
\showpmcdots{3}{3}{4}
\draw (3,0) to (3,5);}
{\end{tikzpicture}}
\newenvironment{dsdn}[2][7pt]
{\begin{tikzpicture} [x=#1,y=#1,baseline=(current bounding box.center)]
\showpmcdots{0}{6}{#2}
\draw (3,0) to (3,#2) to +(0,1);}
{\end{tikzpicture}}
\newenvironment{subsd}[2]
{\begin{scope} [shift={(#1,#2)}]
\showpmcdots{3}{3}{8}
\draw (3,0) to (3,9);\draw[white](0,0) to (0,9);}
{\end{scope}}
\newenvironment{smallsubsd}[2]
{\begin{scope} [shift={(#1,#2)}]
\showpmcdots{3}{3}{4}
\draw (3,0) to (3,5);\draw[white](0,0) to (0,5);}
{\end{scope}}
\newenvironment{smallsubssd}[2]
{\begin{scope} [shift={(#1,#2)}]
\showpmcdots{0}{3}{4}}
{\end{scope}}
\newcommand{\lstrand}[2]{\draw (0,#1) to [out=0,in=180] (3,#2);}
\newcommand{\rstrand}[2]{\draw (3,#1) to [out=0,in=180] (6,#2);}
\newcommand{\ldoublehor}[2]{\draw [dashed] (0,#1) to (3,#1);\draw [dashed] (0,#2) to (3,#2);}
\newcommand{\rdoublehor}[2]{\draw [dashed] (3,#1) to (6,#1);\draw [dashed] (3,#2) to (6,#2);}
\newcommand{\lsinglehor}[1]{\draw [dashed] (0,#1) to (3,#1);}
\newcommand{\rsinglehor}[1]{\draw [dashed] (3,#1) to (6,#1);}
\newcommand{\keypair}[3]{
\ifthenelse{\equal{#3}{left}}{\draw [dashed] (-0.5,#1) .. controls (-1.5,#1) and (-1.5,#2) .. (-0.5,#2);}{}
\ifthenelse{\equal{#3}{right}}{\draw [dashed] (6.5,#1) .. controls (7.5,#1) and (7.5,#2) .. (6.5,#2);}{}
}
\newcommand{\setarrows}[4]{
	\def\upperarrow{#1}
	\def\leftarrow{#2}
	\def\rightarrow{#3}
	\def\lowerarrow{#4}
}
\newcommand{\resetarrows}{\setarrows{$H$}{$d$}{$d$}{$H$}}
\newcommand{\sethspecial}{\setarrows{$H_{sp}$}{$d$}{$d$}{$H_{sp}$}}
\newcommand{\canceldiag}[6]{
\begin{tikzpicture} [x=8pt,y=8pt]
\begin{subsd}{0}{14}\ifthenelse{\not\equal{#1}{#2}}{\keypair{#1}{#2}{left}}{}#3\end{subsd}
\begin{subsd}{12}{14}\ifthenelse{\not\equal{#1}{#2}}{\keypair{#1}{#2}{right}}{}#4\end{subsd}
\begin{subsd}{0}{0}\ifthenelse{\not\equal{#1}{#2}}{\keypair{#1}{#2}{left}}{}#5\end{subsd}
\begin{subsd}{12}{0}\ifthenelse{\not\equal{#1}{#2}}{\keypair{#1}{#2}{right}}{}#6\end{subsd}
\draw [->] (7,18.5) to node [above] {\upperarrow} (11,18.5);
\draw [->] (7,4.5) to node [above] {\lowerarrow} (11,4.5);
\draw [->] (3,13) to node [right] {\leftarrow} (3,10);
\draw [->] (15,13) to node [right] {\rightarrow} (15,10);
\end{tikzpicture}
}
\newcommand{\mapdiag}[5]{
\begin{tikzpicture} [x=10pt,y=10pt,baseline=(current bounding box.center)]
\begin{subsd}{0}{0}\ifthenelse{\not\equal{#1}{#2}}{\keypair{#1}{#2}{left}}{}#4\end{subsd}
\begin{subsd}{12}{0}\ifthenelse{\not\equal{#1}{#2}}{\keypair{#1}{#2}{right}}{}#5\end{subsd}
\draw [->] (7,4.5) to node [above] {#3} (11,4.5);
\end{tikzpicture}
}
\newcommand{\doublemapdiag}[7]{
\begin{tikzpicture} [x=8pt,y=8pt]
\begin{subsd}{0}{0}\ifthenelse{\not\equal{#1}{#2}}{\keypair{#1}{#2}{left}}{}#5\end{subsd}
\begin{subsd}{12}{0}#6\end{subsd}
\begin{subsd}{24}{0}\ifthenelse{\not\equal{#1}{#2}}{\keypair{#1}{#2}{right}}{}#7\end{subsd}
\draw [->] (7,4.5) to node [above] {#3} (11,4.5);
\draw [->] (19,4.5) to node [above] {#4} (23,4.5);
\end{tikzpicture}
}
\newcommand{\iz}{\mathbb{I}_\mathcal{Z}}
\newcommand{\slz}{\mathcal{Z}}
\newcommand{\sla}{\mathcal{A}}
\newcommand{\widehatit}[1]{\widehat{\mathit{#1}}}
\begin{document}

\title{Explicit Koszul-dualizing bimodules in bordered Heegaard Floer homology}
\author{Bohua Zhan}
\date{May 21, 2015}
\maketitle
\begin{abstract}
  We give a combinatorial proof of the quasi-invertibility of
  $\widehatit{CFDD}(\iz)$ in bordered Heegaard Floer homology, which implies a
  Koszul self-duality on the dg-algebra $\sla(\slz)$, for each pointed matched
  circle $\slz$. This is done by giving an explicit description of a rank 1
  model for $\widehatit{CFAA}(\iz)$, the quasi-inverse of
  $\widehatit{CFDD}(\iz)$. This description is obtained by applying homological
  perturbation theory to a larger, previously known model of
  $\widehatit{CFAA}(\iz)$.
\end{abstract}

\section{Introduction}

Bordered Heegaard Floer homology, introduced by Robert Lipshitz, Peter
Ozsv\'{a}th, and Dylan Thurston in \cite{LOT08}, \cite{LOT10a}, gives a way of
extending the hat version of Heegaard Floer homology to 3-manifolds with
boundary. Besides its theoretical interest, it has shown to be an effective
computational tool, for example in giving an efficient, algorithmic way to
compute the Heegaard Floer homology of any 3-manifold \cite{LOT10c}.

The theory associates invariants to 3-manifolds with parametrized boundaries. A
parametrization of the boundary is a diffeomorphism from the boundary to some
standard genus $g$ surface. A standard genus $g$ surface is, in turn, described
by a pointed matched circle, which can be considered as a handle decomposition
of the surface. For a pointed matched circle $\slz$, we denote by $F(\slz)$ the
standard surface parametrized by $\slz$. Let $-\slz$ denote $\slz$ with reversed
orientation, then $F(-\slz)$ is the orientation reversal of $F(\slz)$.

To each pointed matched circle $\slz$, the theory associates a combinatorially
defined dg-algebra $\sla(\slz)$. To every 3-manifold $Y$ with boundary
parametrized by $\slz$, it associates two invariants: the type $A$ invariant
$\widehatit{CFA}(Y)_{\sla(\slz)}$, which is a right $A_\infty$-module over
$\sla(\slz)$, and the type $D$ invariant $^{\sla(-\slz)}\widehatit{CFD}(Y)$,
which is a left type $D$ structure over $\sla(-\slz)$ (we will briefly review
these algebraic concepts in the next section). The modules are well-defined up
to homotopy equivalence (denoted by $\simeq$).

The type $D$ and type $A$ invariants are related to each other by taking the box
tensor product, $\cdot\boxtimes\cdot$, with one of two special bimodules:
$\widehatit{CFDD}(\iz)$ and $\widehatit{CFAA}(\iz)$. They are called the identity
type $\mathit{DD}$ and type $\mathit{AA}$ bimodules, respectively. More
explicitly, the relations are:
\begin{eqnarray}
  \widehatit{CFA}(Y)_{\sla(\slz)} \simeq
  \widehatit{CFAA}(\iz)_{\sla(-\slz),\sla(\slz)} \boxtimes 
  \tensor*[^{\sla(-\slz)}]{\widehatit{CFD}(Y)}{} \\
  ^{\sla(-\slz)}\widehatit{CFD}(Y) \simeq
  \widehatit{CFA}(Y)_{\sla(\slz)} \boxtimes 
  \tensor*[^{\sla(\slz),\sla(-\slz)}]{\widehatit{CFDD}(\iz)}{}.  
\end{eqnarray}

The bimodule $\widehatit{CFDD}(\iz)$ is quasi-invertible, with
$\widehatit{CFAA}(\iz)$ being its quasi-inverse (that is,
$\widehatit{CFDD}(\iz)\boxtimes \widehatit{CFAA}(\iz)\simeq \mathbb{I}$, where
$\mathbb{I}$ denotes the identity type $\mathit{DA}$ bimodule). This implies
that taking box tensor product with $\widehatit{CFDD}(\iz)$ induces an
equivalence of categories between the category of right $A_\infty$-modules over
$\sla(\slz)$ and the category of type $D$ structures over $\sla(-\slz)$. In
particular, the invariants $\widehatit{CFA}(Y)_{\sla(\slz)}$ and
$^{\sla(-\slz)}\widehatit{CFD}(Y)$ actually contain the same information about
$Y$.

Both $\widehatit{CFAA}(\iz)$ and $\widehatit{CFDD}(\iz)$ are defined by
holomorphic curve counts. While $\widehatit{CFDD}(\iz)$ can be described
combinatorially (in \cite{LOT10b} and \cite{LOT10c}), the quasi-invertibility of
$\widehatit{CFDD}(\iz)$ is verified in \cite{LOT10a} using holomorphic curve
methods.

The aim of this paper is to describe combinatorially the type $\mathit{AA}$
invariant $\widehatit{CFAA}(\iz)$. More precisely, we construct an explicit rank
1 $A_\infty$-bimodule $\mathcal{N}$ with the following property:

\begin{theorem} \label{thm:nboxcfddprop}
  $\mathcal{N}\boxtimes\widehatit{CFDD}(\iz)$ has rank 1. Furthermore,
  $\mathcal{N}$ is quasi-invertible, in the sense that there exists a type
  $\mathit{DA}$ bimodule $\mathcal{N'}$, such that $\mathcal{N}\boxtimes
  \mathcal{N'}$ is homotopy equivalent to the identity bimodule ($\mathcal{N'}$
  is called the quasi-inverse of $\mathcal{N}$).
\end{theorem}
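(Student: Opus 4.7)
The plan is to handle the two claims of Theorem~\ref{thm:nboxcfddprop} separately, assuming the explicit construction of $\mathcal{N}$ as a rank-$1$ $A_\infty$-bimodule obtained by homological perturbation from the larger known model of $\widehat{CFAA}(\iz)$.

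For the rank claim on $\mathcal{N}\boxtimes\widehat{CFDD}(\iz)$, I would argue at the level of idempotents. Since $\mathcal{N}$ has a single generator $x$ with specific left and right idempotents, the generators of the box tensor product are in bijection with those generators of $\widehat{CFDD}(\iz)$ whose two type-$D$ idempotents are compatible with the idempotents of $x$. Using the combinatorial description of $\widehat{CFDD}(\iz)$ from \cite{LOT10b}, where generators correspond to pairs of complementary idempotents of the pointed matched circle $\slz$, this reduces to a direct combinatorial enumeration, which I would carry out to show that exactly one such generator exists.

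For quasi-invertibility, I would construct $\mathcal{N'}$ explicitly and verify $\mathcal{N}\boxtimes\mathcal{N'}\simeq\mathbb{I}$ combinatorially, since avoiding the holomorphic-curve argument of \cite{LOT10a} is the whole point of the paper. The natural construction is to apply a homological perturbation procedure analogous to the one that produced $\mathcal{N}$, starting from a large combinatorial type $DA$ model that is already known to pair correctly with $\widehat{CFAA}(\iz)$, and reducing down to rank $1$. I would then compute $\mathcal{N}\boxtimes\mathcal{N'}$ directly and exhibit an explicit $A_\infty$-homotopy to the identity $DA$ bimodule.

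The main obstacle I expect is controlling the higher operations. Homological perturbation on an infinite-rank $A_\infty$-bimodule produces an unbounded family of operations $m_{1,j}$ on the reduced model, and checking both the $A_\infty$-relations for $\mathcal{N}$ and the homotopy equivalence $\mathcal{N}\boxtimes\mathcal{N'}\simeq\mathbb{I}$ requires controlling them all at once. Packaging these operations in closed form, so that the verifications become finite combinatorial checks rather than unbounded ones, is where the real work lies, and is where the graphical calculus developed in the rest of the paper should pay off.
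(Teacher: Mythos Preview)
Your rank-1 argument has a small slip: ``rank 1'' here means rank 1 over the idempotent ring $\mathbf{k}$, so $\mathcal{N}$ has one generator for \emph{each} indecomposable idempotent of $\sla$, not a single generator $x$ with fixed idempotents. The paper's argument is correspondingly simpler than an enumeration: both $\mathcal{N}$ and $\widehat{CFDD}(\iz)$ have underlying $\mathbf{k}$-modules generated by (pairs of complementary) idempotents, so their box tensor product over $\mathbf{k}$ is again generated by idempotents, hence rank 1.

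For quasi-invertibility your plan diverges sharply from the paper's, and it misses the main shortcut. The paper does \emph{not} construct $\mathcal{N}'$ or verify any homotopy equivalence directly. Instead it studies $\mathcal{L}=\mathcal{N}\boxtimes\widehat{CFDD}(\iz)$ and observes that, since $\mathcal{L}$ is a rank-1 type $DA$ bimodule with $\delta^1_1=0$ (every arrow in $\mathcal{N}$ requires nontrivial algebra inputs on both sides), Lemma 2.2.50 of \cite{LOT10a} gives $\mathcal{L}=\tensor[^\sla]{[\phi]}{_\sla}$ for some $A_\infty$-morphism $\phi:\sla\to\sla$. The grading inherited from $\mathcal{N}$ forces $\phi$ to preserve the $G(\slz)$-grading, and a finite inspection of the arrows in $\mathcal{N}$ involving a single length-1 chord on each side shows $\phi_1(\xi)=\xi$ for every length-1 chord $\xi$. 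Proposition 4.11 of \cite{LOT10a} (a purely algebraic statement, not a holomorphic-curve argument) then forces $\phi_1$ to induce the identity on homology, so $\phi$ is a quasi-isomorphism and $\mathcal{L}$ is quasi-invertible. This is what feeds the corollary.

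Your direct route---build $\mathcal{N}'$ by another perturbation and then exhibit an explicit homotopy $\mathcal{N}\boxtimes\mathcal{N}'\simeq\mathbb{I}$---is not obviously wrong, but it is exactly the unbounded verification you yourself flag as the obstacle, and it also presupposes a ``large combinatorial type $DA$ model already known to pair correctly with $\widehat{CFAA}(\iz)$,'' which is not available without either holomorphic input or the very statement you are trying to prove. The paper's insight is that passing through $[\phi]$ collapses the problem to a homology-level check on length-1 chords, so no explicit inverse and no control of higher operations is needed.
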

This quickly leads to a combinatorial proof that $\widehatit{CFDD}(\iz)$ itself
is quasi-invertible. By construction, the bimodule $\mathcal{N}$ is in the
homotopy class of $\widehatit{CFAA}(\iz)$, so we know from analysis that it is
the actual quasi-inverse of $\widehatit{CFDD}(\iz)$. This stronger statement
will be proved combinatorially in \cite{BZ2}.

Considering $\widehatit{CFDD}(\iz)$ as a left-right type $\mathit{DD}$ bimodule
$^{\sla(\slz)}\widehatit{CFDD}(\iz)^{\sla(\slz)}$, its quasi-invertibility also
implies a kind of Koszul self-duality of $\sla(\slz)$. See \cite[Section
8]{LOT10b} for details. One consequence of this Koszul duality is the existence
of an $A_\infty$ morphism from $\sla(\slz)$ to the cobar resolution of
$\sla(\slz)$, inducing an isomorphism on homology. We will give some explicit
computations of this map as an application.

In addition to giving a more concrete understanding of Koszul duality in
$\sla(\slz)$, an explicit description of $\widehatit{CFAA}(\iz)$ can be useful
in various computations. In addition to $\widehatit{CFD}$ and $\widehatit{CFA}$,
there are also bimodule invariants $\widehatit{CFDD}, \widehatit{CFDA}$ and
$\widehatit{CFAA}$ associated to any 3-manifold with two boundaries, in
particular mapping cylinders of surface diffeomorphisms. In general, it is
easier to compute $\widehatit{CFD}$ and $\widehatit{CFDD}$, since it involves
counting simpler holomorphic curves, and there are known methods to exploit the
type $\mathit{DD}$ structure equations. These methods are used to compute
$\widehatit{CFDD}(\iz)$ in \cite{LOT10b, LOT10c}, and $\widehatit{CFDD}(\tau)$
for any arcslide $\tau$. With an explicit description of
$\widehatit{CFAA}(\iz)$, we can then obtain descriptions of type $A$ and type
$\mathit{DA}$ invariants with the same number of generators whenever the type
$D$ and type $\mathit{DD}$ invariants can be computed. This is used in
\cite{BZ2} to give a combinatorial construction and proof of invariance for
$\widehatit{HF}$ using bordered Floer theory.

The construction of the rank 1 model of $\widehatit{CFAA}(\iz)$ begins with a
previously known model $\mathcal{M}$ of $\widehatit{CFAA}(\iz)$. The chain
complex $M$ underlying $\mathcal{M}$ has far more generators than is necessary
for $\widehatit{CFAA}(\iz)$. The tool used to reduce the number of generators is
\emph{homological perturbation theory}. To use this theory, we find a smaller
chain complex $N$ that is homotopy equivalent to $M$. Since the theory is over
$\mathbb{F}_2$, we may take $N$ to be the homology of $M$. Homological
perturbation theory will construct an $A_\infty$-bimodule $\mathcal{N}$ homotopy
equivalent to $\mathcal{M}$, whose underlying chain complex is $N$.

To construct $\mathcal{N}$, we need the additional data that verifies the
homotopy equivalence between $M$ and $N$. That is, morphisms $f:M\to N$ and
$g:N\to M$ such that $g\circ f = \mathbb{I}_N$, and a homotopy map $H:M\to M$
such that $\mathbb{I}_M + f\circ g = d\circ H + H\circ d$. Both $f$ and $g$ will
become obvious after we describe the chain complex and its homology. So the
focus of this paper will be on constructing $H$ and verifying that it is indeed
a homotopy.

\begin{comment}
\begin{theorem} \cite[Lemma 8.6]{LOT10c} Let $\mathcal{M}$ be an
  $A_\infty$-module (bimodule) over an $A_\infty$-algebra $\sla$
  ($A_\infty$-algebras $\sla_1,\sla_2$). Let $M$ be the underlying chain
  complex. Let $f:N\to M$ be a homotopy equivalence. Then there is:
  \begin{itemize}
  \item an $A_\infty$-module (bimodule) structure $\mathcal{N}$ on $N$,
  \item an $A_\infty$ quasi-isomorphism $F:\mathcal{N}\to\mathcal{M}$ with the
    property that $F_1=f$.
  \end{itemize}
\end{theorem}
\end{comment}

When $\slz$ is the (unique) genus 1 pointed matched circle, the size of the
chain complex $M$ is small enough that $H$ can be found directly. This is done
in \cite[Section 8.4]{LOT10c}. The computation here works for pointed matched
circles of any genus, and one can easily check that it agrees with the previous
computation in the genus 1 case.

We now describe plans for the rest of this paper. In Section \ref{sec:chaincx},
we will review some algebraic concepts, and describe the initial, larger model
$\mathcal{M}$ of $\widehatit{CFAA}(\iz)$. In Section \ref{sec:homotopy} we
describe the homotopy map $H$. In Section \ref{sec:proof} we verify that it
satisfies the homotopy equation. In Section \ref{sec:typeaa}, we apply
homological perturbation theory to describe the rank-1 model $\mathcal{N}$ of
$\widehatit{CFAA}(\iz)$, and prove the quasi-invertibility of
$\widehatit{CFDD}(\iz)$. Finally, in Section \ref{sec:exkoszul} we give an
application calculating some homology classes in the cobar resolution of
$\sla(\slz)$.

\subsection{Acknowledgements}

I would like to thank Peter Ozsv\'ath for suggesting this problem, and him and
Zolt\'an Szab\'o for many helpful discussions in the course of writing this
paper. Finally, I would like to thank the referee for pointing out various
improvements to the presentation in the paper.

\section{Algebraic Preliminaries}\label{sec:chaincx}

In this section we briefly review some algebraic concepts, and describe the
initial model $\mathcal{M}$ of the $A_\infty$-bimodule
$\widehatit{CFAA}(\iz)_{\sla(-\slz),\sla(\slz)}$.

We assume that the reader is familiar with pointed matched circles and the
dg-algebra $\sla(\slz)$ associated to a pointed matched circle $\slz$. In most
parts of this paper, we will have in mind some fixed pointed matched circle
$\slz$. Then, a generator of $\sla(\slz)$ will be represented by upward-veering
strands, and a generator of $\sla(-\slz)$ by downward-veering ones. Paired
horizontal strands in the generator will be shown using dashed lines. Later on,
when we are dealing solely with generators of $\sla(\slz)$, we will also omit
the direction markers on strands.

After fixing a pointed matched circle $\slz$, we will write $\sla$ and $\sla'$
for the dg-algebras $\sla(\slz)$ and $\sla(-\slz)$. For any element $a\in\sla$,
its corresponding element in $\sla'$ is denoted $\bar{a}$. In particular this
applies to idempotents. If $i\in\sla$ is an idempotent, we also define
$o(i)\in\sla$ to be the idempotent complementary to $i$.

Let $A$ be a dg-algebra over a ground ring $\mathbf{k}$ (which will be the
direct sum of copies of $\mathbb{F}_2$). Recall that a right $A_\infty$-module
$\mathcal{M}_A$ over $A$ consists of a module $M$ over $\mathbf{k}$, together
with multilinear maps:
\[ m_{1,i} : M \otimes A^{\otimes i} \to M, \] for all $i\ge 0$, where
$A^{\otimes i}$ denotes the tensor product of $i$ copies of $A$, and $A^{\otimes
  0}$ is just $\mathbf{k}$. All tensor products are implicitly taken over
$\mathbf{k}$. These maps satisfy the $A_\infty$ structure equation:

\begin{align} \label{eq:typeastreq}
  0 &= \sum_{i+j=n} m_{1,j}(m_{1,i}(\mathbf{x}, a_1, \dots, a_i), a_{i+1}, \dots, a_n) \nonumber \\
  & + \sum_{i=1}^n m_{1,n}(\mathbf{x}, a_1, \dots, da_i, \dots, a_n) \nonumber \\
  & + \sum_{i=1}^{n-1} m_{1,n-1}(\mathbf{x}, a_1, \dots, a_ia_{i+1}, \dots,
  a_n).
\end{align}

If we ignore those maps $m_{1,i}$ with $i>0$, we get a chain complex, which is
called the chain complex underlying $\mathcal{M}$.

A left type $D$ structure $^A\mathcal{N}$ over $A$ consists of a module $N$ over
$\mathbf{k}$, together with a map:
\[ \delta^1 : N \to A \otimes N, \] satisfying the type $D$ structure equation:

\begin{equation} \label{eq:typedstreq} (\mu_2\otimes \mathbb{I}_N) \circ
  (\mathbb{I}_A\otimes \delta^1) \circ \delta^1 + (\mu_1\otimes \mathbb{I}_N)
  \circ \delta^1 = 0.
\end{equation}
Here $\mu_1:A\to A$ and $\mu_2:A\otimes A\to A$ denote differential and
multiplication on the dg-algebra $A$ (in keeping with the notation for the more
general case, where $A$ is an $A_\infty$-algebra). Later on we will also call
type $D$ structures ``modules''.

In bordered Floer theory, the ground ring $\mathbf{k}$ is the direct sum of
copies of $\mathbb{F}_2$, one for each indecomposable idempotent in
$\sla(\slz)$. For each dg-algebra $\sla(\slz)$ and each $A_\infty$-module or
type $D$ structure, it is always possible to choose (often canonically) a set of
generators over $\mathbb{F}_2$, such that each generator $\mathbf{x}$ has an
indecomposable idempotent $i(\mathbf{x})$ satisfying $\mathbf{x} =
i(\mathbf{x})\mathbf{x}$. Intuitively, we can think of algebras and modules as
generated over $\mathbb{F}_2$, but each generator has an idempotent, such that
idempotents are required to match in algebra actions and in structure equations
like (\ref{eq:typeastreq}) and (\ref{eq:typedstreq}).

Given a right $A_\infty$-module $\mathcal{M}_A$ and a left type $D$ structure
$^A\mathcal{N}$, the box tensor product
$\mathcal{M}_A\boxtimes\tensor[^A]{\mathcal{N}}{}$ is a chain complex whose
underlying vector space is $M\otimes N$, and whose differential is given by:
\begin{equation} \label{eq:boxtensordef}
  \partial(\mathbf{x}\otimes\mathbf{y}) =
  \sum_{k=0}^\infty(m_{1,k}\otimes\mathbb{I}_N)
  (\mathbf{x}\otimes\delta^k(\mathbf{y})).
\end{equation}
Here $\delta^k:N\to A^{\otimes k}\otimes N$ is given by applying $\delta^1$
repeatedly on the $N$ factor $k$ times, while applying the identity map on all
$A$ factors at each step. The sum in (\ref{eq:boxtensordef}) is finite under
certain boundedness conditions on $\mathcal{M}_A$ and $^A\mathcal{N}$. In
bordered Floer theory, these boundedness conditions correspond to admissability
conditions on the Heegaard diagrams (see \cite[Sections 2.4 and 4.4]{LOT08}).

There are analogous definitions for bimodules, and the box tensor products
between them. They are given in detail in \cite{LOT10a}. Given dg-algebras $A$
and $A'$, a right $A_\infty$-bimodule $\mathcal{M}_{A,A'}$ over $A$ and $A'$ is
a module $M$ over $\mathbf{k}$ with structure maps
\[ m_{1,i,j} : M \otimes A^{\otimes i} \otimes A'^{\otimes j} \to M.\] A type
$\mathit{DA}$ structure (or bimodule) $^A\mathcal{N}_{A'}$ over $A$ and $A'$ is
a module $N$ over $\mathbf{k}$ with structure maps
\[ \delta^1_i : N \otimes A'^{\otimes i} \to A \otimes N.\] Finally, a type
$\mathit{DD}$ structure (or bimodule) $^{A,A'}\mathcal{N}$ over $A$ and $A'$ is
a module $N$ over $\mathbf{k}$ with structure maps
\[ \delta^1 : N \to A \otimes A' \otimes N.\] In each case, the structure maps
satisfy a structure equation analogous to Equations (\ref{eq:typeastreq}) and
(\ref{eq:typedstreq}). Note each generator of the bimodules has two idempotents,
one for each algebra action. Also, we used the notational convention of writing
each dg-algebra on the side it acts on, with superscripts indicating type $D$
action, and subscripts indicating $A_\infty$ (or type $A$) action.

For any module or bimodule $\mathcal{M}$, we denote its opposite by
$\overline{\mathcal{M}}$ (see \cite[Definition 2.2.31]{LOT10a}). Taking the
opposite switches the side of all algebra actions (or equivalently, replacing
action of an algebra $A$ by its opposite $A^{\mathrm{opp}}$).

In all examples of chain complexes and modules that we will encounter in this
paper, there is a canonical choice of generators of the underlying vector
space. In such cases, we can describe the differentials or structure maps as a
sum of \emph{arrows}. For chain complexes, the differential is a sum of arrows
$\mathbf{x}\to\mathbf{y}$, where $\mathbf{x}$ and $\mathbf{y}$ are generators of
the chain complex. The arrow maps $\mathbf{x}$ to $\mathbf{y}$, and all other
generators to zero. Similarly, the structure map of a left type $D$ module over
$A$ is a sum of arrows $\mathbf{x}\to a\otimes\mathbf{y}$, where $\mathbf{x}$
and $\mathbf{y}$ are generators of the module, and $a$ is a generator of
$A$. The structure map of a right $A_\infty$-module over $A$ is a sum of arrows
of the form $m_{1,i}(\mathbf{x},a_1,\dots,a_i)\to\mathbf{y}$, where $\mathbf{x}$
and $\mathbf{y}$ are generators of the module, and $a_1,\dots,a_i$ are
generators of $A$. This terminology extends in a straightforward way to the
various types of bimodules.

For dg-algebras $A$ and $B$ with ring of idempotents $\mathbf{k}$, we say a
bimodule $M$ (of type $\mathit{DD}$, $\mathit{DA}$, or $\mathit{AA}$) over $A$
and $B$ has \emph{rank-1} if its underlying module over $\mathbf{k}$ is free of
rank 1. There is a correspondence between rank-1 type $\mathit{DA}$ bimodules
$^BM_A$ with $\delta^1_1=0$ and $A_\infty$-morphisms $\phi:A\to B$ (\cite[Lemma
2.2.50]{LOT10a}). Given $\phi:A\to B$, the corresponding bimodule is denoted
$^B[\phi]_A$. It has type $\mathit{DA}$ actions $\delta^1_1=0$ and
\[ \delta^1_{k+1}(\mathbf{1}, a_1, \dots, a_k) = \phi(a_1, \dots, a_k) \otimes
\mathbf{1}. \]

We now describe in detail the initial model $\mathcal{M}$ of the
$A_\infty$-bimodule $\widehatit{CFAA}(\iz)$, using the formula from
\cite[Proposition 9.2]{LOT10a}:
\begin{eqnarray} \label{eq:typeaaeq} \widehatit{CFAA}(\iz)_{\sla,\sla'} &=&
  \mathrm{Mor}^{\sla}( \tensor*[^{\sla}_{\sla'}]{\widehatit{CFDD}(\iz)}{},
  \tensor[^\sla]{\mathbb{I}}{_\sla}) \nonumber \\
  &=& \mathrm{Mor}^{\sla}( \tensor[_{\sla'}]{{\sla'}}{_{\sla'}} \boxtimes
  \tensor[^{\sla,\sla'}]{\widehatit{CFDD}(\iz)}{},
  \tensor[^\sla]{\mathbb{I}}{_\sla}) \nonumber \\
  &=& \overline{\widehatit{CFDD}(\iz)}^{\sla',\sla} \boxtimes
  \tensor[_\sla]{\sla}{_\sla} \boxtimes
  \tensor[_{\sla'}]{\overline{\sla'}}{_{\sla'}},
\end{eqnarray}
where the second line expands the definition of
$\tensor*[^{\sla}_{\sla'}]{\widehatit{CFDD}(\iz)}{}$, and the third line uses
the identity $\mathrm{Mor}(M,N) = \overline{M} \boxtimes N$. We begin by
describing each of the three factors in the last line of (\ref{eq:typeaaeq}).

The type $\mathit{DD}$ structure
$\tensor[^{\sla,\sla'}]{\widehatit{CFDD}(\iz)}{}$ is computed in \cite{LOT10c}.
It is generated over $\mathbb{F}_2$ by the set of pairs of complementary
idempotents $i\otimes i'$, where $i\in\sla$ and
$i'=\overline{o(i)}\in\sla'$. The type $\mathit{DD}$ action on $i\otimes i'$ is
given by:

\begin{equation} \label{eq:ddop} \delta^1(i\otimes i') =
  \sum_{\xi\in\mathcal{C}, ia(\xi)=a(\xi)j,
    i'\overline{a(\xi)}=\overline{a(\xi)}j'} (a(\xi) \otimes \overline{a(\xi)})
  \otimes (j\otimes j').
\end{equation}

Here $\mathcal{C}$ is the set of chords on $\mathcal{Z}$, $a(\xi)\in\sla$ is the
algebra element formed by summing all ways of adding horizontal strands to
$\xi$, and $\overline{a(\xi)}$ is the corresponding element in $\sla'$. For
example, an arrow in the genus 1 case is:
\begin{equation} \label{eq:exddidarrow}
  \delta^1\left(\begin{smalldsd}\ldoublehor{1}{3}\rdoublehor{2}{4}\end{smalldsd}\right)
  \to
  \left(\begin{smallsd}\strandup{1}{2}\end{smallsd}\otimes\begin{smallsd}\stranddown{1}{2}\end{smallsd}\right)\otimes
\begin{smalldsd}\ldoublehor{2}{4}\rdoublehor{1}{3}\end{smalldsd},
\end{equation}
where
\[ i=\begin{smallsd}\doublehor{1}{3}\end{smallsd}~, ~~
j=\begin{smallsd}\doublehor{2}{4}\end{smallsd}~, ~~\mathrm{and}~
a(\xi)=\begin{smallsd}\strandup{1}{2}\end{smallsd}~. \]

Note that the generators of $\widehatit{CFDD}(\iz)$ are in one-to-one
correspondence with idempotents of $\sla$. So $\widehatit{CFDD}(\iz)$ has rank 1
as a module over $\mathbf{k}$.

On taking the opposite, the directions of the arrows are reversed while the
coefficients are kept the same (that is, an arrow $\mathbf{x} \to a\otimes
\mathbf{y}$ in $\mathcal{M}$ corresponds to the arrow $\mathbf{y}\to a\otimes
\mathbf{x}$ in $\overline{\mathcal{M}}$). For later convenience we will also
reverse the order of the two algebra actions, so that idempotents and algebra
elements in $\sla'$ come first. As an example, the arrow in
(\ref{eq:exddidarrow}) gives raise to the following arrow in
$\overline{\widehatit{CFDD}(\iz)}^{\sla',\sla}$ (which is $\delta^1(j'\otimes
j)\to (\overline{a(\xi)}\otimes a(\xi))\otimes (i'\otimes i)$):

\begin{equation} \label{eq:exdualddidarrow}
  \delta^1\left(\begin{smalldsd}\ldoublehor{1}{3}\rdoublehor{2}{4}\end{smalldsd}\right)
  \to
  \left(\begin{smallsd}\stranddown{1}{2}\end{smallsd}\otimes\begin{smallsd}\strandup{1}{2}\end{smallsd}\right)\otimes
  \begin{smalldsd}\ldoublehor{2}{4}\rdoublehor{1}{3}\end{smalldsd}.
\end{equation}

This is shown graphically in Figure \ref{fig:ddexample}.
\begin{figure}[h!tb]
  \centering
  \begin{tikzpicture} [x=10pt,y=10pt]
    \begin{smallsubsd}{0}{11}\ldoublehor{1}{3}\rdoublehor{2}{4}\end{smallsubsd}
    \begin{smallsubsd}{0}{0}\ldoublehor{2}{4}\rdoublehor{1}{3}\end{smallsubsd}
    \begin{smallsubssd}{-5}{5}\stranddown{1}{2}\end{smallsubssd}
    \begin{smallsubssd}{8}{5}\strandup{1}{2}\end{smallsubssd}
    \draw [->] (3,9) to (3,6);
    \draw [->] (3,8) to (-1,7);
    \draw [->] (3,8) to (7,7);
    \draw (3,8) node[above right] {$\delta^1$};
    \draw (-3.5,4.5) node {$\overline{a(\xi)}\in\sla'$};
    \draw (9.5,4.5) node {$a(\xi)\in\sla$};
    \draw (1.5,10.5) node {$j'$};
    \draw (4.5,10.5) node {$j$};
    \draw (1.5,-0.5) node {$i'$};
    \draw (4.5,-0.5) node {$i$};
  \end{tikzpicture}
  \caption{An example of an arrow in
    $\overline{\widehatit{CFDD}(\iz)}^{\sla',\sla}$. Note strands in $\sla$ go
    upward and strands in $\sla'$ go downward. For later convenience, we put
    $\sla'$ on the left and $\sla$ on the right, but keep in mind that both are
    right type $D$ actions.}
  \label{fig:ddexample}
\end{figure}

The bimodule $_\sla\sla_\sla$ is a left-right $A_\infty$-bimodule with the same
generators as $\sla$. The $A_\infty$-bimodule actions simply come from the
dg-algebra actions on $\sla$. That is:
\begin{align}
  m_{0,1,0}(a) &\to b \textrm{, for each generator } b \textrm{ in } da.
  \nonumber \\
  m_{1,1,0}(b; a) &\to ba \textrm{, whenever } ba\neq 0. \nonumber \\
  m_{0,1,1}(a; b) &\to ab \textrm{, whenever } ab\neq 0. \nonumber
\end{align}

Here $m_{i,1,k}$ is the part of the $A_\infty$-bimodule action with $i$ algebra
inputs on the left and $k$ algebra inputs on the right. We write the algebra and
module inputs in order, omitting any part with zero elements.

The bimodule $_{\sla'}{\sla'}_{\sla'}$ is defined the same way, based on the
dg-algebra $\sla'$. By taking opposites, the directions of the arrows are
reversed. The algebra coefficients stay the same, but acting on the opposite
side. So the actions on $_{\sla'}\overline{\sla'}_{\sla'}$ can be written as:
\begin{eqnarray}
  m_{0,1,0}(b) &\to& a \textrm{, for each generator } b \textrm{ in } da.
  \nonumber \\
  m_{0,1,1}(ba; b) &\to& a \textrm{, whenever } ba\neq 0. \nonumber \\
  m_{1,1,0}(b; ab) &\to& a \textrm{, whenever } ab\neq 0. \nonumber
\end{eqnarray}

We can now describe the bimodule $\mathcal{M}$ in (\ref{eq:typeaaeq}). The
underlying vector space is generated by triples $[a', i'\otimes i, a]$ such that
$i'=\overline{o(i)}$, the idempotent $i'$ agrees with the right idempotent of
$a'$, and $i$ agrees with the left idempotent of $a$. Since $i'$ and $i$ are
determined by $a'$ and $a$, we will omit them and simply write the pair $[a',
a]$. The condition on idempotents becomes that the right idempotent of $a'$ is
complementary to the left idempotent of $a$ (using the identification between
idempotents of $\sla$ and $\sla'$).

The differentials in $_{\sla'}\overline{\sla'}_{\sla'}$ and $_\sla\sla_\sla$
give rise to the following arrows in $\mathcal{M}$:
\begin{eqnarray}
  {[}a', a] &\to& {[}a', b] \textrm{, for each generator } b \textrm{ in } da.
  \nonumber \\
  {[}b', a] &\to& {[}a', a] \textrm{, for each generator } b' \textrm{ in }
  da'. \nonumber
\end{eqnarray}

There is a third type of arrows, coming from the type $\mathit{DD}$ action on
$\overline{\widehatit{CFDD}(\iz)}^{\sla',\sla}$, combined with the $m_{1,1,0}$
actions on $_\sla\sla_\sla$ and
$\tensor[_{\sla'}]{\overline{\sla'}}{_{\sla'}}$. The arrows are of the form:
\[ [a'\overline{a(\xi)}, a] \to [a', a(\xi)a], \] for each chord $\xi$, and
$a\in\sla, a'\in\sla'$ such that $a(\xi)a\neq 0$ and $a'\overline{a(\xi)}\neq
0$.

As an example, we show in Figure \ref{fig:arrowformexample} how one such arrow
follows from the definition of $\cdot\boxtimes\cdot$. This example involves the
type $\mathit{DD}$ arrow in Figure \ref{fig:ddexample}.

\begin{figure}[h!tb]
  \centering
  \begin{tikzpicture} [x=10pt,y=10pt]
    \begin{smallsubsd}{0}{10}\ldoublehor{1}{3}\rdoublehor{2}{4}\end{smallsubsd}
    \begin{smallsubsd}{0}{0}\ldoublehor{2}{4}\rdoublehor{1}{3}\end{smallsubsd}
    \begin{smallsubssd}{-5}{5}\stranddown{1}{2}\end{smallsubssd}
    \begin{smallsubssd}{-10}{10}\stranddown{1}{3}\end{smallsubssd}
    \begin{smallsubssd}{-10}{0}\stranddown{2}{3}\end{smallsubssd}
    \begin{smallsubssd}{8}{5}\strandup{1}{2}\end{smallsubssd}
    \begin{smallsubssd}{13}{10}\strandup{2}{3}\end{smallsubssd}
    \begin{smallsubssd}{13}{0}\strandup{1}{3}\end{smallsubssd}
    \draw [->] (3,9) to (3,6);
    \draw [->] (3,8) to (-1,7);
    \draw [->] (3,8) to (7,7);
    \draw [->] (-8.5,9) to (-8.5,6);
    \draw (-6,8) to (-8.5,7);
    \draw [->] (14.5,9) to (14.5,6);
    \draw (12,8) to (14.5,7);
    \draw (3,8) node[above right] {$\delta^1$};
    \draw (-8.5,7) node[left] {$m_{1,1,0}$};
    \draw (14.5,7) node[right] {$m_{1,1,0}$};
    \draw (-3.5,5) node {$\sla'$};
    \draw (9.5,5) node {$\sla$};
  \end{tikzpicture}
  \caption{Example of the formation of an arrow in $\mathcal{M}$. From left to
    right, the three arrows composing it come from the $A_\infty$-bimodule
    action $m_{1,1,0}$ on $\overline{\sla'}$, the type $\mathit{DD}$ action
    $\delta^1$ on $\overline{\widehatit{CFDD}(\iz)}$, and the
    $A_\infty$-bimodule action $m_{1,1,0}$ on $\sla$.}
  \label{fig:arrowformexample}
\end{figure}

The right $A_\infty$ actions on $\mathcal{M}$ are simply the ones inherited from
those on $\sla'$ and $\sla$ (this is because $_\sla\sla_\sla$ and
$_{\sla'}\overline{\sla'}_{\sla'}$ have no $A_\infty$ actions with both left and
right algebra inputs). They are given by the arrows:
\begin{eqnarray}
  m_{1,1,0}({[}a', a]; b) \to {[}a', ab] \textrm{, whenever } ab\neq 0.
  \nonumber \\
  m_{1,0,1}({[}b'a', a]; b') \to {[}a', a] \textrm{, whenever } b'a'\neq 0.
  \nonumber
\end{eqnarray}
Here $m_{1,j,k}$ is the part of the $A_\infty$-bimodule action with $j$ inputs
from $\sla$ and $k$ inputs from $\sla'$. The first of the two equations comes
from the action on $\sla$, and the second from the action on $\sla'$.

For simplicity of the discussion later, we will write everything in terms of
elements of $\sla$, so that the pair $[a', a]$ is written as the pair $[a_1,
a_2]$, where $a_1=\overline{a'}$ and $a_2=a$. Translating the differential and
the condition on idempotents, we arrive at the following statement:

\begin{proposition} The $A_\infty$-bimodule $\mathcal{M}$ is generated by pairs
  $[a_1, a_2]$, where $a_1$ and $a_2$ are generators of $\sla$, such that the
  left idempotent of $a_1$ is complementary to the left idempotent of
  $a_2$. There are three types of arrows in the differential:
  \begin{eqnarray}
    {[}a_1, a_2] &\to& {[}a_1, b] \textrm{, for each generator } b \textrm{ in } da_2 \nonumber \\
    {[}b, a_2] &\to& {[}a_1, a_2] \textrm{, for each generator } b \textrm{ in } da_1 \nonumber \\
    {[}a(\xi)c, a] &\to& {[}c, a(\xi)a] \textrm{, for each chord } \xi \textrm { and }
    a, c\in\sla \textrm{ such that } a(\xi)c\neq 0 \textrm{ and } a(\xi)a\neq 0. \nonumber
  \end{eqnarray}

  The $A_\infty$-bimodule action consists of the following arrows:
  \begin{eqnarray}
    m_{1,1,0}({[}a_1, a_2]; a) &\to& {[}a_1, a_2a] \textrm{, whenever } a_2a\neq 0 \nonumber \\
    m_{1,0,1}({[}a_1\overline{a'}, a_2]; a') &\to& {[}a_1, a_2]
    \textrm{, whenever } a_1\overline{a'}\neq 0 \nonumber
  \end{eqnarray}
\end{proposition}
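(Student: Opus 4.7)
The statement is a direct reformulation of the description of $\mathcal{M}$ developed in the paragraphs above the Proposition, obtained by applying the bar identification $\overline{\cdot}\colon\sla\to\sla'$ to the $\sla'$-factor. The plan is therefore to first record the relevant algebraic properties of the bar map and then translate each piece of data (generators, idempotent condition, three arrow types, two $A_\infty$-actions) across this identification.

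First I would recall that $\overline{\cdot}$ is an anti-isomorphism of dg-algebras: it is an involution satisfying $\overline{ab}=\overline{b}\,\overline{a}$ and $d\overline{a}=\overline{da}$, and it interchanges the left and right idempotents of each element. Setting $a_1=\overline{a'}$ and $a_2=a$ then gives a bijection between the old generators $[a',a]$ and the new pairs $[a_1,a_2]$. The old idempotent condition (that the right idempotent of $a'$ equals $\overline{o(i)}$, where $i$ is the left idempotent of $a$) becomes, on applying bar, the condition that the left idempotent of $a_1$ equals $o(i)$, i.e.\ that the left idempotents of $a_1$ and $a_2$ are complementary, as claimed.

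Next I would translate each arrow in turn. The arrow $[a',a]\to[a',b]$ with $b\in da$ passes directly to $[a_1,a_2]\to[a_1,b]$ with $b\in da_2$. For the arrow $[b',a]\to[a',a]$ with $b'\in da'$, the chain-map property of bar gives $\overline{b'}\in da_1$, so after renaming this becomes $[b,a_2]\to[a_1,a_2]$ with $b$ a generator of $da_1$. For the $DD$-arrow $[a'\overline{a(\xi)},a]\to[a',a(\xi)a]$, the anti-homomorphism property yields $\overline{a'\,\overline{a(\xi)}}=a(\xi)\,\overline{a'}=a(\xi)a_1$, so writing $c=a_1$ recovers the form $[a(\xi)c,a]\to[c,a(\xi)a]$; the nonvanishing conditions $a(\xi)a\neq 0$ and $a'\overline{a(\xi)}\neq 0$ translate to $a(\xi)a\neq 0$ and $a(\xi)c\neq 0$ since bar is an isomorphism of vector spaces. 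The two $A_\infty$-actions translate in the same way: the $\sla$-action is already in the stated form after relabeling, and for $m_{1,0,1}$ the computation $\overline{b'a'}=\overline{a'}\,\overline{b'}=a_1\overline{a'}$ (where $a'$ now denotes the action input) converts $[b'a',a]$ to $[a_1\overline{a'},a_2]$ with condition $a_1\overline{a'}\neq 0$.

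Every step is formal and driven entirely by the anti-isomorphism properties of bar, so there is no substantive obstacle; the only point requiring care is consistency of the left/right idempotent convention under bar. I would therefore present the proof as a short, line-by-line translation of the five items, citing the standard properties of the bar map rather than computing anything from scratch.
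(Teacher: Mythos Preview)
Your proposal is correct and matches the paper's own approach exactly: the paper presents this Proposition as the result of ``translating the differential and the condition on idempotents'' under the substitution $a_1=\overline{a'}$, $a_2=a$, and your line-by-line use of the anti-isomorphism properties of the bar map is precisely that translation written out in full.
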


In the following sections, we will draw a generator of $\mathcal{M}$ by drawing
$a_1$ and $a_2$ side by side. All strands are then going upward, so we omit
directions on strands. Using this convention, examples of the three types of
arrows in the differential are shown in Figure \ref{fig:exdifferentials}.

\begin{figure}[h!tb]
  \mapdiag{0}{0}{$d$}
  {\rstrand{3}{6}\rdoublehor{4}{7}}
  {\rstrand{3}{4}\rstrand{4}{6}}
  \\ \\
  \mapdiag{0}{0}{$d$}
  {\lstrand{2}{4}\lstrand{3}{6}}
  {\lstrand{2}{6}\lstrand{3}{4}}
  \\ \\
  \mapdiag{0}{0}{$d$}
  {\lstrand{2}{6}\rstrand{4}{7}}
  {\lstrand{4}{6}\rstrand{2}{7}}
  \caption{Examples of arrows in the chain complex $M$ underlying
    $\mathcal{M}$.}
  \label{fig:exdifferentials}
\end{figure}

\section{Description of the homotopy}\label{sec:homotopy}

Let $(M, d)$ be the chain complex underlying $\mathcal{M}$, described in the
previous section. There is a distinguished set of generators of $M$, consisting
of $[a_1,a_2]$ where both $a_1$ and $a_2$ are idempotents (which are necessarily
complementary).  By abuse of notation we will also call these generators
\emph{idempotents}.  It is clear that there are no arrows in the differential in
or out of these generators. Let $(N,d'=0)$ be the subcomplex of $M$ generated by
the idempotents. We will show that $N$ is the homology of $M$, so that $N$ and
$M$ are homotopy equivalent. There are obvious chain maps $f:N\to M$ and $g:M\to
N$, where $f$ is the inclusion map and $g$ is the map sending the idempotents to
themselves and all other generators to zero. It remains to find a homotopy
$H:M\to M$ satisfying
\[ d\circ H + H\circ d = \mathbb{I}_M + f\circ g. \]

We set $H(\mathbf{x})=0$ for all idempotents $\mathbf{x}$. Then the above
equation holds trivially on idempotents. On the other generators, we have
$f\circ g=0$, so the equation reduces to $d\circ H + H\circ d=\mathbb{I}_M$.

We now describe $H$ on the non-idempotents, and verify the required relation in
the next section. From now on all generators that we mention are assumed to be a
non-idempotent. We will call the arrows in the differential of $M$ the
$d$-arrows, and the arrows in the map $H$ the $H$-arrows.

For a generator $[a_1, a_2]$ in $C$, we define the multiplicity
$\mathrm{mult}([a_1, a_2])\in H_1(Z\setminus z, \mathbf{a})$ to be the sum of
multiplicities of $a_1$ and $a_2$. Note this is invariant under the
differential. So the chain complex splits into disjoint parts according to
multiplicity, and to specify the homotopy, it is sufficient to do so on each
part. We say an element $[a_1, a_2]$ has multiplicity one if its multiplicity on
each component of $Z\setminus\mathbf{a}$ is at most one.  The construction of
$H$ for generators of multiplicity one is different from that for other
generators.

\subsection{Multiplicity one}
We begin with generators of multiplicity one. Let $S$ be the set of components
of $Z\setminus\mathbf{a}$ not containing the basepoint $z$ (that is, the
generators of $H_1(Z\setminus z, \mathbf{a})$). There is an ordering $<_\slz$ on
$S$, depending on the pointed matched circle $\slz$, given as follows.  Attach
handles to $Z$ according to the matching on $\mathbf{a}$. Then, traverse the
boundary of $Z$ with handles attached, starting from the top (with $Z$ oriented
so that strands in $\sla(\slz)$ go upwards). Order $S$ according to when we
encounter each element.  Note we will encounter each element of $S$ exactly once
before reaching the bottom due to the condition that must be satisfied for the
matching on a pointed matched circle. An example is shown in Figure
\ref{fig:extraverse}.

\begin{figure}[h!tb]
  \centering
  \begin{tikzpicture}
    \draw [-] (0,1) to (0,-9) to [out=0,in=0] (-0.6,-9) to (-0.6,1) to [out=180,in=180] (0,1);
    \filldraw[fill=white,draw=black] (0,-3.8) arc (-90:90:0.8) -- (0,-1.8) arc (90:-90:1.2) -- (0,-3.8);
    \filldraw[fill=white,draw=black] (0,-2.8) arc (-90:90:0.8) -- (0,-0.8) arc (90:-90:1.2) -- (0,-2.8);
    \filldraw[fill=white,draw=black] (0,-7.8) arc (-90:90:0.8) -- (0,-5.8) arc (90:-90:1.2) -- (0,-7.8);
    \filldraw[fill=white,draw=black] (0,-6.8) arc (-90:90:0.8) -- (0,-4.8) arc (90:-90:1.2) -- (0,-6.8);
    \foreach \x in {-8,-7,...,-1} {
      \draw [white] (0,\x) -- +(0,-0.2);
      \draw [white] (0,\x) -- +(0,0.2);
    }
    \draw (0,-1.5) node[left=1pt] {3};
    \draw (0,-2.5) node[left=1pt] {2};
    \draw (0,-3.5) node[left=1pt] {1};
    \draw (0,-4.5) node[left=1pt] {4};
    \draw (0,-5.5) node[left=1pt] {7};
    \draw (0,-6.5) node[left=1pt] {6};
    \draw (0,-7.5) node[left=1pt] {5};
    \draw [->,>=triangle 60] (0,+0.5) -- (0,-0.5);
  \end{tikzpicture}
  \caption{Example of traversal on the split pointed matched circle with genus
    2. The numbers denote the ordering $<_\slz$ on $S$. Note strands in
    $\sla(\slz)$ go upwards.}
  \label{fig:extraverse}
\end{figure}

Let $(p,p+1)$ be an element of $S$ (with $p+1$ above $p$). The segment
immediately before $(p,p+1)$ in the ordering $<_\slz$ can be found as
follows. Let $q$ be the point paired with $p+1$ in $\slz$. If $q$ is the topmost
point of $\mathbf{a}\subset Z$, then $(p,p+1)$ is the initial segment.
Otherwise $(q,q+1)$ is the segment prior to $(p,p+1)$. See Figure
\ref{fig:exprior} for a demonstration.

\begin{figure}[h!tb]
  \centering
  \begin{tikzpicture}[x=20pt,y=20pt]
    \draw (0,0) to (0,5);
    \draw [ultra thick] (0,0) to (0,1);
    \draw (0,1) to [out=180,in=270] (-1.5,2.5) to [out=90,in=180] (0,4);
    \draw [->] (-0.3,5) to (-0.3,4.3) to [out=180,in=90] (-1.8,2.5) to [out=270,in=180] (-0.3,0.7) to (-0.3,0);
    \draw (0,0) node[right] {$p$};
    \draw (0,1) node[right] {$p+1$};
    \draw (0,4) node[right] {$q$};
    \draw (1,0.5) node[right] {key segment};
    \draw (1,4.5) node[right] {empty};
  \end{tikzpicture}
  \quad\quad
  \begin{tikzpicture}[x=20pt,y=20pt]
    \draw (0,0) to (0,5);
    \draw [ultra thick] (0,3) to (0,4);
    \draw (0,1) to [out=180,in=270] (-1.5,2.5) to [out=90,in=180] (0,4);
    \draw [->] (-0.3,2) to (-0.3,1.3) to [out=180,in=270] (-1.2,2.5) to [out=90,in=180] (-0.3,3.7) to (-0.3,3);
    \draw (0,1) node[right] {$q$};
    \draw (0,3) node[right] {$p$};
    \draw (0,4) node[right] {$p+1$};
    \draw (1,1.5) node[right] {empty};
    \draw (1,3.5) node[right] {key segment};
  \end{tikzpicture}
  \caption{Identifying the segment prior to $(p,p+1)$. In the first case the
    segment $(q,q+1)$ may in fact be the component of $Z$ containing the
    basepoint.}
  \label{fig:exprior}
\end{figure}

Given a generator $\mathbf{x}=[a_1, a_2]$ with multiplicity one, we define the
$\emph{key segment}$ of $\mathbf{x}$ to be the first segment in $S$, according
to the ordering $<_\mathcal{Z}$, at which $\mathrm{mult}(\mathbf{x})$ is
one. Let $(p,p+1)$ be the key segment, and $q$ be the point paired with
$p+1$. Define the pair $\{p+1, q\}$ to be the $\emph{key pair}$ of $\mathbf{x}$.
The main property that results from this construction is that while $\mathbf{x}$
has multiplicity 1 at $(p,p+1)$, it must have multiplicity zero at $(q,q+1)$
(one possibility is that $q$ is the topmost point).

In the following, given a generator $[a_1, a_2]$, we say that the key segment is
occupied on the left if it is covered by $a_1$, and on the right if it is
covered by $a_2$. Since the left idempotents of $a_1$ and $a_2$ are
complementary, one of them contains the key pair. We say the key pair is
occupied on the left if it is contained in the left idempotent of $a_1$, and on
the right if it is contained in the left idempotent of $a_2$. We also
distinguish whether the algebra element realizes the key pair as a
double-horizontal or as the start of a non-horizontal (moving) strand. Note in
the latter case, since the multiplicity at $(q,q+1)$ is zero, the strand must
start at $p+1$.

The above classification divides all (non-idempotent) generators into eight
types. We will define $H(\mathbf{x})$ based on the type of $\mathbf{x}$. 

In the following, let $a\to b$ denote the strand starting at $a$ and ending at
$b$ (so we always have $a<b$). By \emph{moving the strand $a\to b$ to the left},
we mean that starting from $[a_1, a_2]$, construct a new generator $[a_1',
a_2']$, with the strands in $a_2'$ obtained from that in $a_2$ by either
removing the strand $a\to b$, or by factoring $a\to b$ from the end of a longer
strand in $a_2$; and with the strands in $a_1'$ obtained from that in $a_1$ by
adding the strand $a\to b$. If rather than adding the strand $a\to b$, we wish
to attach it to an existing strand in $a_1$, we will state it
explicitly. Similarly we have the notion of moving the strand $a\to b$ to the
right. Sometimes we will make two moves at the same time (with the intermediate
state possibly not valid strand diagrams). The location of double horizontals on
$a_1'$ and $a_2'$ is usually clear (noting that the left idempotents of $a_1'$
and $a_2'$ must be complementary). We will clarify it when it is ambiguous.

In all figures illustrating multiplicity one cases, we will use a dotted
parenthesis to denote the key pair. We will also show $q$ to be above rather
than below $p+1$, so the key segment is just below the dotted parenthesis, but
the definition of $H$ is the same in both cases.

For exactly four types of $\mathbf{x}$, we have $H(\mathbf{x})\neq 0$. The
values of $H(\mathbf{x})$ in these four types are as follows (see Figure
\ref{fig:all-m1-homotopy}):

\begin{enumerate}
\item If the key segment is occupied on the left and the key pair is occupied on
  the left as a double-horizontal, then there is a strand $a\to b$ on the left
  with $a<p+1<b$. Set $H$ to resolve the crossing involving the horizontal
  strand at $p+1$.

\item If the key segment is occupied on the right and the key pair is occupied
  on the right by a moving strand, then there must be strands $i\to p+1$ and
  $p+1\to j$ on the right. Set $H$ to replace these two strands with the strand
  $i\to j$ and the double-horizontal at $\{p+1,q\}$.

  If there is a strand ending at $q$ on the left, then there is an additional
  term in $H$, moving the strand ending at $q$ to the right and the strand $i\to
  p+1$ to the left.

\item If the key segment is occupied on the right by a strand $i\to j$, and the
  key pair is occupied on the left as a double-horizontal, then $H$ factors the
  strand $i\to p+1$ from the right and moves it to the left.

\item If the key segment is occupied on the right, and the key pair is occupied
  on the left by a moving strand, then there must be a strand $p+1\to j$ on the
  left and a strand $i\to p+1$ on the right. Set $H$ to move the strand $i\to
  p+1$ to the left attaching it to the $p+1\to j$ strand, and leaving a double
  horizontal at $\{p+1, q\}$ at right.

  There are two special cases: first, if there is a strand $j\to q$ with $j\neq
  p+1$ on the left, then $H$ contains an additional term moving $j\to q$ to the
  right and $i\to p+1$ to the left.

  Second, if there is a strand $p+1\to q$ on the left (possible only if
  $q>p+1$), then $H$ contains an additional term moving $p+1\to q$ to the right
  and $i\to p+1$ to the left.
\end{enumerate}

\begin{figure}[h!tb]
  \textbf{Case 1:} \\
  \mapdiag{4}{7}{$H$}
  {\lstrand{2}{5}\ldoublehor{4}{7}}
  {\lstrand{2}{4}\lstrand{4}{5}}
  \\
  \textbf{Case 2:} \\
  \mapdiag{4}{7}{$H_{ord}$}
  {\rstrand{2}{4}\rstrand{4}{5}}
  {\rstrand{2}{5}\rdoublehor{4}{7}}
  \quad
  \mapdiag{4}{7}{$H_{sp}$}
  {\lstrand{6}{7}\rstrand{2}{4}\rstrand{4}{5}}
  {\lstrand{2}{4}\rstrand{4}{5}\rstrand{6}{7}}
  \\
  \textbf{Case 3:} \\
  \mapdiag{4}{7}{$H$}
  {\ldoublehor{4}{7}\rstrand{2}{5}}
  {\lstrand{2}{4}\rstrand{4}{5}}
  \\
  \textbf{Case 4:} \\
  \mapdiag{4}{7}{$H_{ord}$}
  {\lstrand{4}{5}\rstrand{2}{4}}
  {\lstrand{2}{5}\rdoublehor{4}{7}}
  \quad
  \mapdiag{4}{7}{$H_{sp}$}
  {\lstrand{4}{5}\lstrand{6}{7}\rstrand{2}{4}}
  {\lstrand{2}{4}\lstrand{4}{5}\rstrand{6}{7}}
  \\ \\
  \mapdiag{4}{7}{$H_{sp}$}
  {\lstrand{4}{7}\rstrand{2}{4}}
  {\lstrand{2}{4}\rstrand{4}{7}}
  \caption{Diagrams for the homotopy map in multiplicity-one cases.}
  \label{fig:all-m1-homotopy}
\end{figure}

The overall picture is as follows: we partition all generators of multiplicity
one into ordered pairs, such that for each ordered pair $(\mathbf{x_i},
\mathbf{y_i})$, there is a $d$-arrow $\mathbf{x_i}\to\mathbf{y_i}$. The part of
$H$ not including the three special cases maps each $\mathbf{y_i}$ to
$\mathbf{x_i}$ and $\mathbf{x_i}$ to zero. In verifying the relation $d\circ
H+H\circ d=\mathbb{I}_M$, the compositions $d\circ
H:\mathbf{y_i}\to\mathbf{x_i}\to\mathbf{y_i}$ and $H\circ
d:\mathbf{x_i}\to\mathbf{y_i}\to\mathbf{x_i}$ account for the identity. If
$(\mathbf{x_i}, \mathbf{y_i})$ is an ordered pair, we say $\mathbf{x_i}$ is on
the \emph{$d$-side} and $\mathbf{y_i}$ is on the \emph{$H$-side}. The three
special cases in $H$ are additional maps from the $H$-side to the $d$-side (that
is, mapping $\mathbf{y_i}$ to $\mathbf{x_j}$ for some $i\neq j$). Intuitively,
the part of $H$ mapping each $\mathbf{y_i}$ to $\mathbf{x_i}$ performs the
inverse of a $d$-arrow around $p+1$. The special arrows perform a different move
around $p+1$, along with moving a strand ending at $q$ to the right.

Whether a generator is on the $d$-side or the $H$-side depends solely on the
type of the generator. The eight types are summarized in Table
\ref{table:summary}. The numbers after $d$ or $H$ indicate under which case they
will be covered in the proof in the next section, and if labeled $H$, also the
case of $H$ in the above description.

\begin{table}[ht]
  \centering
  \begin{tabular}{c|cccc}\hline
    \backslashbox{key seg.}{key pair} &
    left, double-hor. & right, double-hor. & left, moving & right, moving
    \\ \hline
    left & $H$-1 & $d$-7 & $d$-5 & $d$-8 \\
    right & $H$-3 & $d$-6 & $H$-4 & $H$-2 \\
  \end{tabular}
  \caption{Summary of cases for multiplicity one generators.}
  \label{table:summary}
\end{table}

It would be simpler if there were no special cases in $H$. That is, if the only
$H$-arrows were those from the $H$-side to the $d$-side of the same
pair. However, this is impossible as demonstrated in Figures
\ref{fig:specialhomotopy} and \ref{fig:specialhomotopy2}.  Each figure shows
four generators $\mathbf{x,y,z,w}$ such that $d\mathbf{x}=\mathbf{y}+\mathbf{w}$
and $d\mathbf{z}=\mathbf{w}$, with no other $d$-arrows involving these
generators. This forces the homotopy to be $H\mathbf{y}=\mathbf{x}+\mathbf{z}$
and $H\mathbf{w}=\mathbf{z}$.

\begin{figure} [h!tb]
  \subfloat[Case 4, second special $H$] {
    \begin{tikzpicture} [x=10pt,y=10pt]
      \begin{smallsubsd}{0}{10}\lstrand{1}{4}\rdoublehor{2}{4}\end{smallsubsd}
      \begin{smallsubsd}{12}{10}\lstrand{1}{2}\rstrand{2}{4}\end{smallsubsd}
      \begin{smallsubsd}{0}{0}\lstrand{2}{4}\rstrand{1}{2}\end{smallsubsd}
      \begin{smallsubsd}{12}{0}\ldoublehor{2}{4}\rstrand{1}{4}\end{smallsubsd}
      \draw [->] (3,9) to node [right] {$d$} (3,6);
      \draw [->] (15,9) to node [right] {$d$} (15,6);
      \draw [->] (7,9) to node [above right] {$d$} (11,6);
    \end{tikzpicture}
  }
  \caption{Examples of why special cases of homotopy are needed, part 1.}
  \label{fig:specialhomotopy}
\end{figure}

\begin{figure} [h!tb]
  \subfloat[Case 2, special $H$] {
    \begin{tikzpicture} [x=8pt,y=8pt]
      \begin{subsd}{0}{14}\lstrand{4}{5}\ldoublehor{1}{3}\rstrand{6}{8}\rdoublehor{5}{7}\end{subsd}
      \begin{subsd}{12}{14}\lstrand{6}{7}\ldoublehor{1}{3}\rstrand{4}{5}\rstrand{7}{8}\end{subsd}
      \begin{subsd}{0}{0}\lstrand{4}{5}\ldoublehor{1}{3}\rstrand{6}{7}\rstrand{7}{8}\end{subsd}
      \begin{subsd}{12}{0}\ldoublehor{1}{3}\ldoublehor{5}{7}\rstrand{4}{5}\rstrand{6}{8}\end{subsd}
      \draw [->] (3,13) to node [right] {$d$} (3,10);
      \draw [->] (15,13) to node [right] {$d$} (15,10);
      \draw [->] (7,13) to node [above right] {$d$} (11,10);
    \end{tikzpicture}
  }
  \quad\quad
  \subfloat[Case 4, first special $H$] {
    \begin{tikzpicture} [x=8pt,y=8pt]
      \begin{subsd}{0}{14}\lstrand{4}{5}\lstrand{6}{8}\rdoublehor{5}{7}\rdoublehor{1}{3}\end{subsd}
      \begin{subsd}{12}{14}\lstrand{6}{7}\lstrand{7}{8}\rstrand{4}{5}\rdoublehor{1}{3}\end{subsd}
      \begin{subsd}{0}{0}\lstrand{4}{5}\lstrand{7}{8}\rstrand{6}{7}\rdoublehor{1}{3}\end{subsd}
      \begin{subsd}{12}{0}\lstrand{6}{8}\ldoublehor{5}{7}\rstrand{4}{5}\rdoublehor{1}{3}\end{subsd}
      \draw [->] (3,13) to node [right] {$d$} (3,10);
      \draw [->] (15,13) to node [right] {$d$} (15,10);
      \draw [->] (7,13) to node [above right] {$d$} (11,10);
    \end{tikzpicture}
  }
  \caption{Examples of why special cases of homotopy are needed, part 2.}
  \label{fig:specialhomotopy2}
\end{figure}

\subsection{Other generators}
We now consider generators that do not satisfy the multiplicity one condition.
Let $\mathbf{x}=[a_1, a_2]$ be such a generator. This means
$\mathrm{mult}(\mathbf{x})$ is greater than 1 at some segment in $S$. Let $[i,
i+1]$ be the bottom-most segment with multiplicity greater than 1 (note we are
no longer using the ordering $<_\slz$ on $S$). Since two strands cannot start
from the same point $i$, this segment must have multiplicity 2, and the segment
$[i-1, i]$ must have multiplicity 1.  There is a unique strand starting at $i$
(call it the $i$ strand) and a unique strand that covers $[i-1, i]$. Let $j$ be
the starting point of the strand covering $[i-1,i]$, and call that strand the
$j$ strand. Note $i$, but not $j$, is unchanged in any $d$-arrow.

The definition of $H$ consists of the following cases (see Figure
\ref{fig:all-notm1-homotopy}):
\begin{enumerate}
\item If both the $i$ strand and the $j$ strand are on the left, and they cross
  each other, the $H$-arrow uncrosses the two strands.
\item If both the $i$ strand and the $j$ strand are on the right, and they do
  not cross, the $H$-arrow crosses the two strands.
\item If the $i$ strand is on the left and the $j$ strand is on the right, the
  $H$-arrow factors the strand $j\to i$ from the right and moves it to the left
  attaching to the $i$ strand.
\end{enumerate}

\begin{figure} [h!tb]
  \textbf{Case 1:} 
  \mapdiag{0}{0}{$H$}
  {\lstrand{2}{7}\lstrand{3}{5}}
  {\lstrand{2}{5}\lstrand{3}{7}}
  \\ \\
  \textbf{Case 2:} 
  \mapdiag{0}{0}{$H$}
  {\rstrand{2}{5}\rstrand{3}{7}}
  {\rstrand{2}{7}\rstrand{3}{5}}
  \\ \\
  \textbf{Case 3:} 
  \mapdiag{0}{0}{$H$}
  {\lstrand{3}{7}\rstrand{2}{5}}
  {\lstrand{2}{7}\rstrand{3}{5}}
  \caption{Diagrams for the homotopy map in the non-multiplicity-one cases.}
  \label{fig:all-notm1-homotopy}
\end{figure}

On all other generators $H$ is zero. The cases are summarized in Table
\ref{table:summary2}.  There is again a pairing of generators
$(\mathbf{x_i},\mathbf{y_i})$, so that for each pair there is a $d$-arrow
$\mathbf{x_i}\to\mathbf{y_i}$. In this case, there are no special cases in $H$,
so $H$ is exactly the map sending $\mathbf{y_i}$ to $\mathbf{x_i}$ and
$\mathbf{x_i}$ to 0 in each pair $(\mathbf{x_i},\mathbf{y_i})$.

\begin{table}[h]
  \centering
  \begin{tabular}{c|cc}\hline
    \backslashbox{$i$ strand}{$j$ strand} &
    left & right \\ \hline
    left & ($d$,$H$)-1 & $H$-3 \\
    right & $d$-4 & ($d$,$H$)-2 \\
  \end{tabular}
  \caption{Summary of cases for generators not of multiplicity one.}
  \label{table:summary2}
\end{table}

%%% Local Variables: 
%%% mode: latex
%%% TeX-master: "koszul"
%%% End: 

\clearpage
\section{Verification of the homotopy} \label{sec:proof}

\subsection{Multiplicity one}

In this section we verify that the map $H$ defined above satisfies the equation
$d\circ H+H\circ d=\mathbb{I}_M$ on the non-idempotent part of $M$, beginning
with the multiplicity one case. We do so by checking the equation
\begin{equation} \label{eq:homotopynonidem}
  d\circ H + H\circ d = \mathbb{I}_M
\end{equation}
on each of the eight types of generators. Depending on the type, the generator
appears on one side of some pair $(\mathbf{x_i},\mathbf{y_i})$. For every such
pair, there is a $d$-arrow $\mathbf{x_i}\to\mathbf{y_i}$ and an $H$-arrow
$\mathbf{y_i}\to\mathbf{x_i}$. So there is a natural identity term in $d\circ
H+H\circ d$, and it remains to check that all other terms sum to zero.

Write $H=H_{ord}+H_{sp}$, where $H_{sp}$ contains arrows coming from the three
special cases of $H$, and $H_{ord}$ is the remaining part of $H$. So for each
pair $(\mathbf{x_i},\mathbf{y_i})$ we have
$H_{ord}\mathbf{y_i}=\mathbf{x_i}$. It is also not difficult to determine
$\mathbf{y_i}$ from $\mathbf{x_i}$ - simply take the obvious differential around
$p+1$. We call a generator \emph{special} if it appears on either side of a pair
$(\mathbf{x_i},\mathbf{y_i})$ such that $H_{sp}\mathbf{y_i}\neq 0$. One easily
checks that a generator is special if and only if there is a strand ending at
$q$ on the left. Generators of type $H$-1, $H$-3, $d$-5 and $d$-8 can never be
special (the first two due to double horizontal on the left, the last two due to
a strand ending at $p+1$ on the left).

The strategy for verifying (\ref{eq:homotopynonidem}) is different for
generators on the $H$ side and the $d$ side, so we will describe them
separately. For a generator $\mathbf{y}$ on the $H$-side, we define three sets
of generators of $M$. Let $G_1$ be the set of terms in $d\mathbf{y}$ that are on
the $H$ side, $G_2$ be the set of all terms in $d(H_{ord}\mathbf{y})$ excluding
$\mathbf{y}$, and $G_3$ be the set of all terms in $d(H_{sp}\mathbf{y})$. For a
non-special $\mathbf{y}$, $G_3$ is of course empty. We will show that in this
case all generators in $G_1$ are also non-special, and that $H_{ord}$ induces a
bijection between $G_1$ and $G_2$. For special $\mathbf{y}$ we will show the
following:
\begin{itemize}
\item There is a generator $\mathbf{y_o}$ common to both $G_2$ and $G_3$.
\item All generators in $G_1$ are also special.
\item The map $H_{ord}$ induces a bijection between $G_1$ and $G_2\setminus
  \{\mathbf{y_o}\}$.
\item The map $H_{sp}$ induces a bijection between $G_1$ and $G_3\setminus
  \{\mathbf{y_o}\}$.
\end{itemize}
These imply that the terms in $d\circ H+H\circ d$ other than the natural
identity term sum to zero.

A large part of the bijection can be considered ``trivial'', since it involves
$d$-arrows that do not modify strands around $p+1$ or $q$. They can be carried
out in the ``same'' way on either side of an $H$-arrow. So a term in
$d\mathbf{y}$ coming from a $d$-arrow away from $p+1$ and $q$ corresponds to the
term obtained by performing the ``same'' move in $d(H_{ord}\mathbf{y})$ or
$d(H_{sp}\mathbf{y})$. Since $H$-arrows can introduce double horizontals at
$\{p+1,q\}$ on the right side, it can affect whether a strand ending at $q$ on
the left can be moved to the right in a $d$-arrow. This is essentially the
reason why special cases are needed.

The main checks are therefore for differentials involving strands around $p+1$
and $q$. The cases are described below, and the reader is advised to follow
along using the illustrations in Appendix \ref{sec:appendix}. In each
cancellation diagram in the appendix, the top-left generator is $\mathbf{y}$,
the top-right generator is either $H_{ord}\mathbf{y}$ or $H_{sp}\mathbf{y}$ (as
indicated on the top edge).  The bottom-left generator is an element in $G_1$,
and the bottom-right generator is the corresponding element in $G_2$ or
$G_3$. For generators that are special, there is one more diagram showing the
cancellation involving $\mathbf{y_o}$. In these diagrams, $\mathbf{y}$ is on the
top-left, $H_{ord}\mathbf{y}$ is on the bottom-left, $H_{sp}\mathbf{y}$ is on
the top-right, and $\mathbf{y}_{\mathbf{o}}$ is on the bottom-right.

We now turn to the case of a generator $\mathbf{x}$ on the $d$-side. The first
term in $d\circ H+H\circ d$ is zero. There is a distinguished term $\mathbf{y}$
in $d\mathbf{x}$ - the generator paired with $\mathbf{x}$. If $\mathbf{x}$ is
non-special, then $\mathbf{y}$ is also non-special, and we show that this is the
only term in $d\mathbf{x}$ on the $H$ side.  If $\mathbf{x}$ is special, then
$\mathbf{y}$ is also special, and we show there is exactly one other,
non-special, term $\mathbf{y'}$ in $d\mathbf{x}$ on the $H$ side, and that
$H_{sp}\mathbf{y}=H\mathbf{y'}$. This verifies the equation for $\mathbf{x}$.
In the appendix, we will show a diagram for each type of the last cancellation,
with $\mathbf{x}$ on the top-left, $\mathbf{y}$ on the bottom-left,
$\mathbf{y'}$ on the top-right, and $H_{sp}\mathbf{y}=H\mathbf{y'}$ on the
bottom-right.

We will assume throughout that in the key pair $\{p+1, q\}$ the point $q$ is
above $p+1$. The reader may check that in the other case the situation is the
same or simpler.

\subsubsection{Case 1}

\begin{figure}[h!tb]
  \mapdiag{4}{7}{$H$}
  {\lstrand{2}{5}\ldoublehor{4}{7}}
  {\lstrand{2}{4}\lstrand{4}{5}}
  \caption{Homotopy, Case 1}
  \label{fig:homotopy1}
\end{figure}

In this case we consider generators $\mathbf{y}$ with key segment at left and
key pair at left as a double horizontal. The $H$-arrow $\mathbf{y}\to\mathbf{x}$
is shown in Figure \ref{fig:homotopy1}. There are no special generators. The
$d$-arrows starting at $\mathbf{y}$ involving the one strand around $p+1$
include:
\begin{itemize}
\item Merging with a strand above (Case 1.1)
\item Merging with a strand below (Case 1.2)
\item Moving a part not containing $p+1$ to the right (Case 1.3)
\item Moving a part containing $p+1$ to the right (Case 1.4).
\end{itemize}
The $d$-arrows starting at $\mathbf{x}$ involving the two strands shown include:
\begin{itemize}
\item Merging upper strand with a strand above (Case 1.1)
\item Merging lower strand with a strand below (Case 1.2)
\item Moving a part of the lower strand (Case 1.3)
\item Moving a part of the upper strand (Case 1.4).
\end{itemize}
Crossing the two strands gets back to $\mathbf{y}$.

\subsubsection{Case 2}

\begin{figure}[h!tb]
  \mapdiag{4}{7}{$H_{ord}$}
  {\rstrand{2}{4}\rstrand{4}{5}}
  {\rstrand{2}{5}\rdoublehor{4}{7}}
  \\ \\
  \mapdiag{4}{7}{$H_{sp}$}
  {\lstrand{6}{7}\rstrand{2}{4}\rstrand{4}{5}}
  {\lstrand{2}{4}\rstrand{4}{5}\rstrand{6}{7}}
  \caption{Homotopy, Case 2}
  \label{fig:homotopy2}
\end{figure}

In this case we consider generators $\mathbf{y}$ with key segment at right and
key pair at right as the start of a moving strand. The $H_{ord}$-arrow
$\mathbf{y}\to\mathbf{x}$ and the possible $H_{sp}$-arrow
$\mathbf{y}\to\mathbf{x}'$ are shown in Figure \ref{fig:homotopy2}. First
suppose the generator $\mathbf{y}$ is non-special (that is, no strand of
$\mathbf{y}$ end at $q$). The $d$-arrows starting at $\mathbf{y}$ involving the
two strands shown include:
\begin{itemize}
\item Splitting the lower strand (Case 2.1)
\item Splitting the upper strand (Case 2.2)
\item Moving a strand from left to attach to lower strand (Case 2.3).
\end{itemize}
The $d$-arrows starting at $\mathbf{x}$ involving the single strand shown
include:
\begin{itemize}
\item Splitting the single strand below $p+1$ (Case 2.1)
\item Splitting the single strand above $p+1$ (Case 2.2)
\item Moving a strand from left to attach to the single strand (Case 2.3).
\end{itemize}
Uncrossing the single strand gets back to $\mathbf{y}$.

Now suppose $\mathbf{y}$ is special. There is one more $d$-arrow starting at
$\mathbf{x}$. This is because the homotopy produces a double-horizontal at
$\{p+1, q\}$, so a strand ending at $q$ on the left can now move to the
right. This term cancels against a special $d$-arrow starting at $\mathbf{x}'$,
as shown in Case 2.4.

For the case when $\mathbf{y}$ is special, there is one more relevant strand -
the strand ending at $q$. The $d$-arrows starting at $\mathbf{y}$ involving this
strand include:
\begin{itemize}
\item Merging with a strand below (Case 2.5)
\item Moving part of the strand to the right (Case 2.6).
\end{itemize}
The $d$-arrows starting at $\mathbf{x}$ involving this strand are exactly the
same. The cancellations are shown in Cases 2.5a and 2.6a. The $d$-arrows
starting at $\mathbf{x}'$ involving this strand include:
\begin{itemize}
\item Attaching with a strand moved from the left (Case 2.5)
\item Splitting the strand (Case 2.6).
\end{itemize}
The cancellations are shown in Cases 2.5b and 2.6b. The $d$-arrows starting at
$\mathbf{x}'$ involving the two strands around $p+1$ include:
\begin{itemize}
\item Moving part of the lower strand to the right (Case 2.7b)
\item Splitting the upper strand (Case 2.8b)
\item Merging the lower strand with a strand below (Case 2.9b).
\end{itemize}
These cancel against $d$-arrows starting at $\mathbf{y}$ as in Cases 2.1, 2.2,
and 2.3, respectively.

\subsubsection{Case 3}

\begin{figure}[h!tb]
  \mapdiag{4}{7}{$H$}
  {\ldoublehor{4}{7}\rstrand{2}{5}}
  {\lstrand{2}{4}\rstrand{4}{5}}
  \caption{Homotopy, Case 3}
  \label{fig:homotopy3}
\end{figure}

In this case we consider generators $\mathbf{y}$ with key segment at right and
key pair at left as a double horizontal. The $H$-arrow $\mathbf{y}\to\mathbf{x}$
is shown in Figure \ref{fig:homotopy3}. Note if the single strand on the $H$
side ends exactly at $p+1$, then a double horizontal is produced on the right
side (however, since no strand can end at $q$ on the left, there are no special
cases). The $d$-arrows starting at $\mathbf{y}$ involving the single strand
include:
\begin{itemize}
\item Splitting at a point above $p+1$ (Case 3.1)
\item Splitting at a point below $p+1$ (Case 3.2)
\item Attaching with a strand moved from the left (Case 3.3).
\end{itemize}
The $d$-arrows starting at $\mathbf{x}$ include:
\begin{itemize}
\item Splitting the right strand (Case 3.1)
\item Moving part of the left strand to the right (Case 3.2)
\item Merging the left strand with a strand below (Case 3.3).
\end{itemize}
Moving all of the left strand to the right gets back to $\mathbf{y}$.

\subsubsection{Case 4}

\begin{figure}[h!tb]
  \mapdiag{4}{7}{$H_{ord}$}
  {\lstrand{4}{5}\rstrand{2}{4}}
  {\lstrand{2}{5}\rdoublehor{4}{7}}
  \\ \\
  \mapdiag{4}{7}{$H_{sp}$}
  {\lstrand{4}{5}\lstrand{6}{7}\rstrand{2}{4}}
  {\lstrand{2}{4}\lstrand{4}{5}\rstrand{6}{7}}
  \\ \\
  \mapdiag{4}{7}{$H_{sp}$}
  {\lstrand{4}{7}\rstrand{2}{4}}
  {\lstrand{2}{4}\rstrand{4}{7}}
  \caption{Homotopy, Case 4}
  \label{fig:homotopy4}
\end{figure}

In this case we consider generators $\mathbf{y}$ with key segment at right and
key pair at left as the start of a moving strand. The $H_{ord}$-arrow
$\mathbf{y}\to\mathbf{x}$ and the two possible $H_{sp}$-arrows
$\mathbf{y}\to\mathbf{x}'$ are shown in Figure \ref{fig:homotopy4}. Note the
conditions for the two possible $H_{sp}$-arrows are mutually exclusive, so there
is at most one $H_{sp}$-arrow in any given case.

We first consider the case where $\mathbf{y}$ is non-special. The $d$-arrows
starting at $\mathbf{y}$ involving the two strands shown include:
\begin{itemize}
\item Splitting the right strand (Case 4.1)
\item Attaching the right strand with a strand moved from the left (Case 4.2)
\item Moving part of the left strand to the right (Case 4.3)
\item Merging the left strand with a strand above (Case 4.4).
\end{itemize}
The $d$-arrows starting at $\mathbf{x}$ involving the single strand shown
include:
\begin{itemize}
\item Moving to the right a part not containing $p+1$ (Case 4.1)
\item Moving to the right a part strictly containing $p+1$ (Case 4.3)
\item Merging with a strand below (Case 4.2)
\item Merging with a strand above (Case 4.4).
\end{itemize}
Moving a part containing $p+1$ on the boundary gets back to $\mathbf{y}$.

Now suppose $\mathbf{y}$ is special. Since $H$ creates a double horizontal at
$\{p+1,q\}$, there is one more $d$-arrow starting at $\mathbf{x}$, moving the
strand ending at $q$ to the right. This strand may or may not extend to $p+1$,
giving the two special cases. This cancels against a special $d$-arrow starting
at $\mathbf{x}'$ in both cases, as shown in Cases 4.5 and 4.6.

For the other cases, there is again one more relevant strand, ending at $q$.
The $d$-arrows starting at $\mathbf{y}$ involving this strand include:
\begin{itemize}
\item Merging with a strand below, resulting in a strand starting above $p+1$
  (Case 4.7)
\item Moving part of the strand to the right (Case 4.8)
\item Merging with a strand below, resulting in the strand $p+1\to q$ (Case 4.9)
\end{itemize}
The $d$-arrows starting at $\mathbf{x}$ are exactly the same for Cases 4.7 and
4.8. For Case 4.9 the strand ending at $q$ is merged with a longer strand (shown
in Case 4.9a). The $d$-arrows starting at $\mathbf{x}'$ involving this strand
include:
\begin{itemize}
\item Attaching a strand moved from the left, resulting in a strand starting
  above $p+1$ (Case 4.7b)
\item Splitting the strand (Case 4.8b)
\item Attaching a strand moved from the left, resulting in the strand $p+1\to q$
  (Case 4.9b).
\end{itemize}
Finally, we consider $d$-arrows starting at $\mathbf{x}'$ involving the two
strands around $p+1$. They include:
\begin{itemize}
\item Moving part of the lower strand to the right (Case 4.10b,c)
\item Joining the lower strand with a strand below (Case 4.11b,c)
\item Moving part of the upper strand to the right (first special case, Case
  4.12b)
\item Splitting the upper strand (second special case, Case 4.12c)
\item Joining the upper strand with a strand above (first special case, Case
  4.13b).
\end{itemize}
These cancel against $d$-arrows starting at $\mathbf{y}$ as in Cases 4.1 through
4.4.

\subsubsection{Case 5}

\begin{figure}[h!tb]
  \doublemapdiag{4}{7}{$d$}{$H$}
  {\lstrand{2}{4}\lstrand{4}{6}}
  {\ldoublehor{4}{7}\lstrand{2}{6}}
  {\lstrand{2}{4}\lstrand{4}{6}}
  \caption{Identity term in Case 5.}
  \label{fig:identity5}
\end{figure}

In this and the next three cases we consider generators on the $d$ side. In this
case, the generator $\mathbf{x}$ has key segment at left and key pair at left as
the start of a moving strand. There are no special generators in this case (due
to a strand ending at $p+1$ on the left). The only $d$-arrow
$\mathbf{x}\to\mathbf{y}$ to the $H$ side and the $H$-arrow
$\mathbf{y}\to\mathbf{x}$, giving the identity term, are shown in Figure
\ref{fig:identity5}.

The only other $d$-arrow starting at $\mathbf{x}$ that changes the type of the
generator is shown in Figure \ref{fig:excase5}, but we are still on the $d$
side.

\begin{figure}[h!tb]
  \mapdiag{4}{7}{$d$}
  {\lstrand{2}{4}\lstrand{4}{6}\rsinglehor{5}}
  {\lstrand{2}{4}\lstrand{5}{6}\rstrand{4}{5}}
  \caption{A $d$-arrow that moves to a different type, but still on the $d$
    side.}
  \label{fig:excase5}
\end{figure}

\subsubsection{Case 6}

\begin{figure}[h!tb]
  \doublemapdiag{4}{7}{$d$}{$H$}
  {\rstrand{2}{6}\rdoublehor{4}{7}}
  {\rstrand{2}{4}\rstrand{4}{6}}
  {\rstrand{2}{6}\rdoublehor{4}{7}}
  \caption{Identity term in Case 6.}
  \label{fig:identity6}
\end{figure}

In this case, the generator $\mathbf{x}$ has key segment at right and key pair
at right as a double horizontal. If $\mathbf{x}$ is not special, Figure
\ref{fig:identity6} shows the only $d$-arrow $\mathbf{x}\to\mathbf{y}$ to the
$H$ side. If $\mathbf{x}$ is special --- that is, if there is a strand ending at
$q$ on the left, then the $d$-arrow moving that strand to the right will move
the double horizontal to the left, making a generator on the $H$ side. This
corresponds to the special case in Case 2 (Case 6.1).

\begin{figure}[h!tb]
  \mapdiag{3}{7}{$d$}
  {\lstrand{6}{7}\rstrand{2}{4}\rdoublehor{3}{7}}
  {\ldoublehor{3}{7}\rstrand{6}{7}\rstrand{2}{4}}
  \caption{A $d$-arrow that maps to the $H$ side in Case 6, forcing a special
    case.}
  \label{fig:excase6}
\end{figure}

\subsubsection{Case 7}

\begin{figure}[h!tb]
  \doublemapdiag{4}{7}{$d$}{$H$}
  {\lstrand{2}{4}\rdoublehor{4}{7}}
  {\ldoublehor{4}{7}\rstrand{2}{4}}
  {\lstrand{2}{4}\rdoublehor{4}{7}}
  \\ \\
  \doublemapdiag{4}{7}{$d$}{$H$}
  {\lstrand{2}{5}\rdoublehor{4}{7}}
  {\lstrand{4}{5}\rstrand{2}{4}}
  {\lstrand{2}{5}\rdoublehor{4}{7}}
  \caption{Identity term in Case 7.}
  \label{fig:identity7}
\end{figure}

In this case, the generator $\mathbf{x}$ has key segment at left and key pair at
right as a double horizontal. Figure \ref{fig:identity7} shows the obvious
$d$-arrow $\mathbf{x}\to\mathbf{y}$ to the $H$ side. If there is a strand ending
at $q$ (only possible in the second of the two cases in Figure
\ref{fig:identity7}), this strand can be moved to the right, moving the double
horizontal to the left and changing the type of the generator. There are two
ways this can happen, as shown in Figure \ref{fig:excase7}. They correspond to
the special cases in Case 4 (Cases 7.1 and 7.2).

\begin{figure} [h!tb]
  \mapdiag{4}{7}{$d$}
  {\lstrand{2}{5}\lstrand{6}{7}\rdoublehor{4}{7}}
  {\lstrand{2}{5}\ldoublehor{4}{7}\rstrand{6}{7}}
  \\ \\
  \mapdiag{4}{7}{$d$}
  {\lstrand{2}{7}\rdoublehor{4}{7}}
  {\ldoublehor{4}{7}\rstrand{2}{7}}
  \caption{$d$-arrows that map to the $H$ side in Case 7, forcing two special
    cases.}
  \label{fig:excase7}
\end{figure}

\subsubsection{Case 8}

\begin{figure} [h!tb]
  \doublemapdiag{4}{7}{$d$}{$H$}
  {\lstrand{3}{4}\rstrand{4}{6}}
  {\ldoublehor{4}{7}\rstrand{3}{6}}
  {\lstrand{3}{4}\rstrand{4}{6}}
  \caption{Identity term in Case 8.}
  \label{fig:identity8}
\end{figure}

In this case, the generator $\mathbf{x}$ has key segment at left and key pair at
right as the start of a moving strand. The only $d$-arrow
$\mathbf{x}\to\mathbf{y}$ to the $H$ side is shown in Figure
\ref{fig:identity8}. There are no special generators.

This concludes the check for generators of multiplicity one.

\subsection{Other generators}

We now consider generators that do not have multiplicity one. They can be
classified into four types as in Table \ref{table:summary2}. We will again check
each type separately. The strategy is even simpler than before, as there are no
special cases. For a generator on the $H$ side, we check that there is a
bijection between the corresponding sets $G_1$ and $G_2$. For a generator on the
$d$ side, we check that there is exactly one differential to the $H$ side,
carrying it to the generator in the same pair.

The checks for Cases 1 and 2 in the definition of $H$ have a common ingredient
--- checking those terms where both the $d$-arrow and the $H$-arrow are
restricted to the same side.  For Case 2, the argument is the same as the one
used to show $H_*(\sla(\mathcal{Z}))=0$ on generators with multiplicity greater
than one. This is given in \cite[Section 4]{LOT10a}. While the proof there uses
a slightly different homotopy, the argument easily adapts to the homotopy we
give here. For Case 1 we use the dual of that argument. With these cases
covered, we will only consider terms in $d\circ H+H\circ d$ that involve both
sides.

\subsubsection{Case 1}

\begin{figure} [h!tb]
  \mapdiag{0}{0}{$H$}
  {\lstrand{2}{7}\lstrand{3}{5}}
  {\lstrand{2}{5}\lstrand{3}{7}}
  \caption{Homotopy in Case 1.}
  \label{fig:homotopym1}
\end{figure}

In this case both the $i$ strand and the $j$ strand are on the left. The
$H$-arrow $\mathbf{y}\to\mathbf{x}$ is shown in Figure \ref{fig:homotopym1}.

We begin by verifying the equation for $\mathbf{y}$. The $d$-arrows starting at
$\mathbf{y}$ include:
\begin{itemize}
\item Moving a part of the $j$ strand to the right. The end of that part can be:
  \subitem Lower than $i$ (Case 9.1)
  \subitem Higher than the endpoint of the $i$ strand (Case 9.2)
  \subitem Coinciding with the endpoint of the $i$ strand (Case 9.2')
  \subitem In between $i$ and the endpoint of the $i$ strand (Case 9.3)
\end{itemize}
Note that if part of the shorter strand is moved, we remain on the $d$ side
(Figure \ref{fig:excasem1}). As stated in the beginning of this section, we may
ignore $d$-arrows within the left side.

\begin{figure} [h!tb]
  \mapdiag{0}{0}{$d$}
  {\lstrand{2}{7}\lstrand{3}{5}}
  {\lstrand{2}{7}\lstrand{4}{5}\rstrand{3}{4}}
  \caption{A term in the differential that remains on the $d$ side.}
  \label{fig:excasem1}
\end{figure}

The $d$-arrows starting at $H\mathbf{y}=\mathbf{x}$ include:
\begin{itemize}
\item Moving part of the $j$ strand not containing $i$ (Case 9.1)
\item Moving part of the $i$ strand. The end of that part can be:
  \subitem Above the endpoint of the $j$ strand (Case 9.2)
  \subitem Coinciding with the endpoint of the $j$ strand (Case 9.2')
  \subitem Below the endpoint of the $j$ strand (Case 9.3).
\end{itemize}
Note it is impossible to move part of lower strand containing $j$, due to the
multiplication rule that double crossing gives zero.

Now we verify the equation for $\mathbf{x}$ on the $d$ side. Crossing the two
strands shown is the only way to go to a generator of type 1 on the $H$ side
(note we already covered the case where the $d$-arrow happens within the left
side). It is impossible to reach type 2 (both strands on the right). To reach
type 3, we need to arrange that the new $j$ strand is at right. This means we
will need to move a part of the current $j$ strand containing $i$ to the
right. However, this will produce a double crossing.

\subsubsection{Case 2}

\begin{figure} [h!tb]
  \mapdiag{0}{0}{$H$}
  {\rstrand{2}{5}\rstrand{3}{7}}
  {\rstrand{2}{7}\rstrand{3}{5}}
  \caption{Homotopy in Case 2.}
  \label{fig:homotopym2}
\end{figure}

In this case, both the $i$ strand and the $j$ strand are on the right. The
$H$-arrow $\mathbf{y}\to\mathbf{x}$ is shown in Figure \ref{fig:homotopym2}.

For generators on the $H$ side, the $d$-arrows that involve only the right side
are again covered by the computation of $H_*(A(\mathcal{Z}))$. There is only one
other case that involves the displayed strands: moving a strand from the left to
attach to the $j$ strand (for both $d\mathbf{y}$ and $d(H\mathbf{y})$). This is
shown in Case 10.1. Note the interval $[j,i]$ cannot be occupied on the left
side.

For generators on the $d$ side, there is no way to reach the $H$ side other than
by $d$-arrows involving only the right side.

\subsubsection{Case 3}

\begin{figure} [h!tb]
  \mapdiag{0}{0}{$H$}
  {\lstrand{3}{7}\rstrand{2}{5}}
  {\lstrand{2}{7}\rstrand{3}{5}}
  \caption{Homotopy in Case 3.}
  \label{fig:homotopym3}
\end{figure}

In this case, the $i$ strand is on the left and the $j$ strand is on the
right. The $H$-arrow $\mathbf{y}\to\mathbf{x}$ is shown in Figure
\ref{fig:homotopym3}. We consider the generator $\mathbf{y}$ in this case.

The $d$-arrows starting at $\mathbf{y}$ include:
\begin{itemize}
\item Merging the $i$ strand with a strand above (Case 11.1)
\item Splitting the $j$ strand below $i$ (Case 11.2)
\item Splitting the $j$ strand above $i$ (Case 11.3)
\item Attaching the $j$ strand with a strand moved from left (Case 11.4)
\item Moving part of the $i$ strand to the right, so that the new strand on the
  right does not cross the $j$ strand (Case 11.5).
\end{itemize}
If the new strand on the right does cross the $i$ strand, we remain on the $d$
side (Figure \ref{fig:excasem3}).

\begin{figure}
  \mapdiag{0}{0}{$d$}
  {\lstrand{3}{7}\rstrand{2}{5}}
  {\lstrand{4}{7}\rstrand{2}{5}\rstrand{3}{4}}
  \caption{A term in the differential that remains on the $d$ side.}
  \label{fig:excasem3}
\end{figure}

The $d$-arrows starting at $H\mathbf{y}=\mathbf{x}$ include:
\begin{itemize}
\item Merging the $j$ strand with a strand above (Case 11.1)
\item Merging the $j$ strand with a strand below (Case 11.4)
\item Splitting the $i$ strand (Case 11.3)
\item Moving part of $j$ strand not containing $i$ to the right (Case 11.2)
\item Moving part of $j$ strand to the right, so that the new strand on the
  right crosses the $i$ strand (Case 11.5).
\end{itemize}
Note if the new strand on the right interleaves, and does not cross the $i$
strand, the term is zero by double crossing (Figure \ref{fig:exnocasem3}).

\begin{figure}
  \mapdiag{0}{0}{No $d$}
  {\lstrand{2}{7}\rstrand{3}{5}}
  {\lstrand{4}{7}\rstrand{3}{5}\rstrand{2}{4}}
  \caption{An example of a term that is not in the differential of $H\mathbf{x}$
    due to double crossing.}
  \label{fig:exnocasem3}
\end{figure}

\subsubsection{Case 4}

In this case we consider the generator $\mathbf{x}$ in the $H$-arrow in the
previous case, with the $i$ strand on the right and the $j$ strand on the
left. Beside the obvious $d$-arrow to $\mathbf{y}$, the only other possible way
to reach the $H$ side is by moving part of the $j$ strand containing $i$ to the
right. However, this is impossible due to double crossing (same example as in
Figure \ref{fig:exnocasem3}).

This concludes the check for generators with multiplicity greater than one, and
the verification that $H$ is indeed a homotopy.

\section{A rank-1 model of $\widehatit{CFAA}(\iz)$}\label{sec:typeaa}

Having found a homotopy $H$ on the chain complex underlying the
$A_\infty$-bimodule $\mathcal{M}$, we can use homological perturbation theory to
describe the smaller bimodule $\mathcal{N}$. We refer the reader to
\cite[Section 8.2]{LOT10c} on how to construct an $A_\infty$-bimodule action
using homological perturbation theory. The result is summarized in the following
theorem.

\begin{theorem} \label{thm:ndescription}
  The $A_\infty$-bimodule $\mathcal{N}_{\sla',\sla}$, described below, is
  homotopy equivalent to the bimodule $\mathcal{M}$ given in
  (\ref{eq:typeaaeq}), and is therefore a model of the invariant
  $\widehatit{CFAA}(\iz)_{\sla',\sla}$.

  The vector space $N$ underlying $\mathcal{N}$ is generated over $\mathbb{F}_2$
  by the indecomposable idempotents of $\sla$ (using the definitions in Section
  \ref{sec:chaincx}, $\mathcal{N}$ is a rank-1 bimodule). The arrows in the
  $A_\infty$-bimodule action correspond one-to-one with sequences
  $[a_{1,1},a_{1,2}],\dots,[a_{2n,1},a_{2n,2}]$ of generators of $\sla$ that
  satisfy the following three conditions:

  \begin{enumerate}
  \item $[a_{1,1},a_{1,2}]=[i',i]$ and $[a_{2n,1},a_{2n,2}]=[j',j]$ for some
    idempotents $i,j\in A$, and $i'=o(i)$, $j'=o(j)$.
  \item Each $[a_{2k,1},a_{2k,2}]$ is obtained from $[a_{2k-1,1},a_{2k-1,2}]$ by
    either factoring out some $\overline{b'}\in\sla$ from $a_{2k-1,1}$ on the
    right (so that $a_{2k-1,1}=a_{2k,1}\overline{b'}$ and
    $a_{2k-1,2}=a_{2k,2}$), or by multiplying $a_{2k-1,2}$ with some $b\in\sla$
    on the right (so that $a_{2k,2}=a_{2k-1,2}b$ and $a_{2k,1}=a_{2k-1,1}$). The
    elements $b'$ and $b$ are not necessarily equal between steps.
  \item Each $[a_{2k+1,1},a_{2k+1,2}]$ is obtained from $[a_{2k,1},a_{2k,2}]$ by
    one of the $H$ arrows described in Section \ref{sec:homotopy}.
  \end{enumerate}
  Let $b'_1,\dots,b'_p$ be the sequence of $b'\in\sla'$ and $b_1,\dots,b_q$ be
  the sequence of $b\in\sla$ used in condition 2. The arrow corresponding to
  this sequence is:
  \[ m_{1,p,q}([i]; b'_1,\dots,b'_p; b_1,\dots,b_q) \to [j]. \]

  Here $m_{1,p,q}$ is the part of the $A_\infty$-bimodule action on
  $\mathcal{N}$ with $p$ inputs on the $\sla'$ side and $q$ inputs on the $\sla$
  side. The idempotent of a generator $[i]$ of this $A_\infty$-bimodule is $i$
  on the $\sla$ side and $\overline{o(i)}$ on the $\sla'$ side.
\end{theorem}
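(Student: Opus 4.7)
The plan is to derive this directly from the homological perturbation lemma for $A_\infty$-bimodules cited in the paragraph preceding the theorem, namely \cite[Section 8.2]{LOT10c}. We have all the required data: the chain complex $(M,d)$ underlying $\mathcal{M}$, the homology $(N, 0)$, the inclusion $f : N \to M$ sending each idempotent to itself, the projection $g : M \to N$, and the homotopy $H$ constructed and verified in sections \ref{sec:homotopy} and \ref{sec:proof}. The perturbation lemma then automatically produces an $A_\infty$-bimodule $\mathcal{N}$ on $N$ homotopy equivalent to $\mathcal{M}$, with structure maps given by the standard tree formula
\[
  m^{\mathcal{N}}_{1,p,q} \;=\; g \circ (\text{sum of trees built from the $\mathcal{M}$-operations and internal $H$'s}) \circ f .
\]
The work is to simplify this formula into the combinatorial recipe stated in the theorem.

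The essential simplification comes from the fact that $\sla$ and $\sla'$ are honest dg-algebras, so the only nonzero bimodule operations on $\mathcal{M}$ are $m^{\mathcal{M}}_{1,0,0} = d$, $m^{\mathcal{M}}_{1,1,0}$, and $m^{\mathcal{M}}_{1,0,1}$, as recorded in section \ref{sec:chaincx}. Consequently every tree in the HPT sum collapses into a linear ladder: starting from $f([i]) = [\overline{o(i)}, i]$, one alternately applies a single input-consuming operation ($m^{\mathcal{M}}_{1,0,1}$ factoring an $\overline{b'}$ from the left-hand entry, or $m^{\mathcal{M}}_{1,1,0}$ multiplying $b$ onto the right-hand entry) and the homotopy $H$, ending with $g$. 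The odd-to-even steps of the ladder are precisely the algebra actions in condition (2), and the even-to-odd steps are precisely the $H$-arrows described in section \ref{sec:homotopy}, matching condition (3). The requirement that $g$ produces a nonzero output forces $[a_{2n,1}, a_{2n,2}]$ to be an idempotent generator $[j', j]$ with $j' = o(j)$, giving condition (1). Counting the algebra-action steps by type shows that the total contribution is an arrow feeding $p$ inputs on the $\sla'$ side and $q$ inputs on the $\sla$ side, exactly as in the formula for $m_{1,p,q}$.

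The one subtle point is the role of the internal differential $d = m^{\mathcal{M}}_{1,0,0}$, which in principle may appear at any interior vertex of a tree. However, the relation $dH + Hd = \mathbb{I}_M + fg$ allows any insertion of $d$ inside the ladder to be absorbed: an internal $HdH$ rewrites as $H$ plus terms that are killed either by an adjacent $f$ (since $Hf = 0$ on idempotents by our convention) or by an adjacent $g$ (since $gH = 0$). After telescoping these rewrites, only the ladders in which $d$ never appears internally contribute, which is precisely the combinatorics stated in the theorem. This bookkeeping is the only non-routine step; it is the standard verification that the perturbation-lemma output is consistent with the homotopy data we have chosen, and it is the main step I would need to write out carefully. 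Everything else, including the idempotent constraints on $[i]$ and $[j]$ on the two algebra sides, follows immediately from tracking left idempotents through the recursion.
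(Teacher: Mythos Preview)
Your approach is the paper's: the paper gives no separate proof of this theorem, presenting it instead as the direct output of the homological perturbation construction of \cite[Section 8.2]{LOT10c} applied to the data $(M,N,f,g,H)$ assembled in sections~\ref{sec:chaincx}--\ref{sec:proof}. Your unwinding of the tree sum into linear ladders---using that the only input-consuming operations on $\mathcal{M}$ are $m^{\mathcal{M}}_{1,1,0}$ and $m^{\mathcal{M}}_{1,0,1}$---is exactly the intended argument, and your identification of the alternating algebra-action and $H$ steps with conditions (2) and (3) is correct.

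One point worth correcting: your ``subtle point'' about internal insertions of $d=m^{\mathcal{M}}_{1,0,0}$ is a non-issue. In the perturbation formula of \cite[Lemma 8.6]{LOT10c} (as in the standard Kadeishvili-type transfer), the trees are indexed by how the algebra inputs are distributed, and every internal vertex consumes at least one algebra element; the bare differential never appears as a vertex. So the ladder description falls out immediately, with no telescoping needed. If you had a different formulation of the transfer in mind, note that the side conditions $Hf=0$, $gH=0$, and $H^2=0$ all hold here anyway: $H$ vanishes on idempotents by construction, and every $H$-arrow in section~\ref{sec:homotopy} lands on the $d$-side of the pairing, where $H$ again vanishes. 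So any reasonable version of the formula yields the same answer, and the paragraph you flagged as ``the main step I would need to write out carefully'' can simply be dropped.
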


For example, let $\slz$ be the genus 1 pointed matched circle. The following
sequence of $[a_{i,1},a_{i,2}]$

\begin{equation}
  \begin{smalldsd}\ldoublehor{1}{3}\rdoublehor{2}{4}\end{smalldsd}\to
  \begin{smalldsd}\ldoublehor{1}{3}\rstrand{2}{3}\end{smalldsd}\to
  \begin{smalldsd}\lstrand{2}{3}\rdoublehor{1}{3}\end{smalldsd}\to
  \begin{smalldsd}\lstrand{2}{3}\rstrand{1}{2}\end{smalldsd}\to
  \begin{smalldsd}\lstrand{1}{3}\rdoublehor{2}{4}\end{smalldsd}\to
  \begin{smalldsd}\ldoublehor{1}{3}\rdoublehor{2}{4}\end{smalldsd} \nonumber
\end{equation}

gives rise to the following arrow in the action:

\[
m_{1,1,2}\left(
  \left[\begin{smallsd}\doublehor{2}{4}\end{smallsd}\right];~
  \begin{smallsd}\stranddown{1}{3}\end{smallsd}~;~
  \begin{smallsd}\strandup{2}{3}\end{smallsd},
  \begin{smallsd}\strandup{1}{2}\end{smallsd}\right) \to
\left[\begin{smallsd}\doublehor{2}{4}\end{smallsd}\right],
\]
where $[i]$ for an idempotent $i\in\sla$ denotes the corresponding generator of
$\mathcal{N}$.

Since $\sla'$ and $\sla$ are opposite algebras, we can also consider
$\mathcal{N}$ as a left-right $A_\infty$-bimodule $_\sla\mathcal{N}_\sla$. We
will use this form of $\mathcal{N}$ in the next section.

The bimodule $\mathcal{N}$ inherits a relative grading from the larger model
$\mathcal{M}$. Since the generators in $\mathcal{N}$ are those in $\mathcal{M}$
composed of idempotents, we may choose the relative grading so that the grading
of all generators in $\mathcal{N}$ are zero (this is also the grading one would
obtain from the standard Heegaard diagram for the identity diffeomorphism,
starting with a zero grading for any one of the generators).

We now prove the main results of this chapter:

\begin{proof}[Proof of Theorem \ref{thm:nboxcfddprop}]
  Let
  \begin{equation} \label{eq:defofl}
    \tensor[^\sla]{\mathcal{L}}{_\sla} =
    \widehatit{CFAA}(\iz)_{\sla',\sla} \boxtimes
    \tensor[^{\sla,\sla'}]{\widehatit{CFDD}(\iz)}{}
    \simeq
    \mathcal{N}_{\sla',\sla}\boxtimes\tensor[^{\sla,\sla'}]{\widehatit{CFDD}(\iz)}{}.
  \end{equation}
  Since the vector spaces underlying both $\mathcal{N}$ and
  $\widehatit{CFDD}(\iz)$ are generated by idempotents, the vector space
  underlying $\mathcal{L}$ is also generated by idempotents. So $\mathcal{L}$ is
  a rank-1 bimodule. Furthermore, the type $\mathit{DA}$-bimodule action in
  $\mathcal{L}$ satisfies $\delta^1_1=0$ since all arrows in $\mathcal{N}$
  involve non-idempotent algebra inputs from both $\sla'$ and $\sla$. Hence
  Lemma 2.2.50 in \cite{LOT10a} applies, showing
  $\tensor[^\sla]{\mathcal{L}}{_\sla}=\tensor[^{\sla}]{[\phi]}{_{\sla}}$ for
  some $A_\infty$-algebra morphism $\phi:\sla\to\sla$.

  From the grading on $\mathcal{N}$ and $\widehatit{CFDD}(\iz)$, there is a
  relative grading on $\mathcal{L}$ with all generators having grading
  zero. This implies that the $A_\infty$ morphism $\phi$ respects the
  $G(\slz)$-grading on $\sla$. By classifying arrows in $\mathcal{N}$ involving
  only one length-1 chord on each side, it is clear that $\phi_1(\xi)=\xi$ for
  any length-1 chord $\xi$. By Proposition 4.11 in \cite{LOT10a}, we conclude
  that $\phi_1$ induces the identity map on homology. So $\phi$ is a
  quasi-isomorphism and $\mathcal{L}$ is quasi-invertible.
\end{proof}

\begin{corollary}
  $\widehatit{CFDD}(\iz)$ is quasi-invertible, hence the functor
  $\cdot\boxtimes\widehatit{CFDD}(\iz)$ induces an equivalence of categories
  between the category of right $A_\infty$-modules over $\sla(\slz)$, and the
  category of left type $D$ structures over $\sla(-\slz)$.
\end{corollary}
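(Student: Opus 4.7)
The plan is to deduce the corollary from Theorem \ref{thm:nboxcfddprop} by constructing an explicit quasi-inverse of $\widehat{CFDD}(\iz)$.

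The proof of Theorem \ref{thm:nboxcfddprop} establishes that $\mathcal{L} = \mathcal{N}\boxtimes\widehat{CFDD}(\iz) \simeq [\phi]$ for an $A_\infty$-quasi-isomorphism $\phi\colon\sla\to\sla$. Since any $A_\infty$-quasi-isomorphism admits a homotopy inverse, $[\phi]$, and hence $\mathcal{L}$, has a two-sided quasi-inverse $\mathcal{L}'$: a type $DA$ bimodule with $\mathcal{L}\boxtimes\mathcal{L}'\simeq\mathbb{I}$ and $\mathcal{L}'\boxtimes\mathcal{L}\simeq\mathbb{I}$.

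I would then take
\[
\mathcal{Q} := \mathcal{N}\boxtimes\mathcal{L}'
\]
as the candidate quasi-inverse of $\widehat{CFDD}(\iz)$. This is a type $AA$ bimodule over $(\sla',\sla)$. Using the associativity of the box tensor product, valid here because the two pairings involved lie over the distinct algebras $\sla$ and $\sla'$ (attached to different sides of $\mathcal{N}$) and thus commute, I would compute
\[
\mathcal{Q}\boxtimes\widehat{CFDD}(\iz) = (\mathcal{N}\boxtimes\mathcal{L}')\boxtimes\widehat{CFDD}(\iz) \simeq \mathcal{L}'\boxtimes(\mathcal{N}\boxtimes\widehat{CFDD}(\iz)) = \mathcal{L}'\boxtimes\mathcal{L} \simeq \mathbb{I}.
\]
The reverse relation $\widehat{CFDD}(\iz)\boxtimes\mathcal{Q}\simeq\mathbb{I}$ follows by the same argument with the roles of $\sla(\slz)$ and $\sla(-\slz)$ interchanged, since the construction of $\mathcal{M}$, $\mathcal{N}$, and $H$ is manifestly symmetric in the two algebras.

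With the quasi-invertibility of $\widehat{CFDD}(\iz)$ established, the equivalence of categories is formal: the functors $F(X) = X\boxtimes\widehat{CFDD}(\iz)$ and $G(Y) = \mathcal{Q}\boxtimes Y$ are mutually quasi-inverse, with $G\circ F\simeq\mathrm{Id}$ and $F\circ G\simeq\mathrm{Id}$ obtained by one more application of associativity. The only genuine check is the associativity of $\boxtimes$ across distinct algebras, which amounts to unwinding the bimodule box-tensor definition and observing that the two orders of pairing give canonically isomorphic chain complexes. I expect this to be the main technical point; everything else is a formal consequence of Theorem \ref{thm:nboxcfddprop}.
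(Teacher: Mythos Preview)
Your approach matches the paper's: both take the quasi-inverse $\mathcal{L}'$ of $\mathcal{L}$ provided by Theorem~\ref{thm:nboxcfddprop} and conclude that $\mathcal{L}'\boxtimes\mathcal{N}$ (equivalently your $\mathcal{N}\boxtimes\mathcal{L}'$, the two pairings being over different algebras) is a quasi-inverse to $\widehat{CFDD}(\iz)$. One caveat: the homotopy $H$ of Section~\ref{sec:homotopy} treats the two sides of $[a_1,a_2]$ quite asymmetrically, so your ``manifestly symmetric'' claim for the reverse relation does not hold as stated; the paper, for its part, only records the one direction.
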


\begin{proof}
  With $\mathcal{L}$ as in (\ref{eq:defofl}), there is $\mathcal{L}'$ such that
  \[ ^\sla\mathbb{I}_\sla\simeq \mathcal{L}'\boxtimes\mathcal{L} \simeq
  \mathcal{L}'\boxtimes\mathcal{N}\boxtimes\widehatit{CFDD}(\iz), \] which means
  $\mathcal{L}'\boxtimes\mathcal{N}$ is the quasi-inverse to
  $\widehatit{CFDD}(\iz)$.
\end{proof}

\paragraph{Remark:} The stronger statement, that $\mathcal{N}$ is the
quasi-inverse of $\widehatit{CFDD}(\iz)$, will be proved combinatorially in
\cite{BZ2}.

%%% Local Variables: 
%%% mode: latex
%%% TeX-master: "koszul"
%%% End: 

\section{Examples of Koszul duality} \label{sec:exkoszul}

In this section, we use our description of $\mathcal{N}\simeq
\widehatit{CFAA}(\iz)$ to give an explicit $A_\infty$ morphism from
$\sla=\sla(\slz)$ to $\mathrm{Cob}(\sla)$, inducing an isomorphism on
homology. We will assume in this section that $\mathcal{N}$ is in fact the
quasi-inverse of $\widehatit{CFDD}(\iz)$. First, we review some material from
\cite{LOT10a} and \cite[Section 8]{LOT10b}.

\begin{definition}
  An \emph{augmentation} of a dg-algebra $A$ is a map $\epsilon: A\to\mathbf{k}$
  satisfying $\epsilon(1) = 1$ and $\epsilon(a_1a_2) =
  \epsilon(a_1)\epsilon(a_2)$. Given an augmentation, we let
  $A_+=\mathrm{ker}(\epsilon)$ be the \emph{augmentation ideal} of $A$.
\end{definition}

The strand algebra $\sla$ is augmented with the augmentation map $\epsilon$
sending each idempotent to itself and other generators to zero. So the
augmentation ideal $\sla_+$ is generated by the non-idempotents.

\begin{definition}
  Given an augmented dg-algebra $A$, the cobar resolution $\mathrm{Cob}(A)$ is
  defined as $T^*(A_+[1]^*)$, the tensor algebra of the dual of the augmentation
  ideal. This can be given the structure of a dg-algebra, with product being the
  one on the tensor algebra, and the differential consisting of the following
  arrows:
  \begin{itemize}
  \item $a_1^*\otimes\cdots\otimes b^*\otimes\cdots\otimes a_k^* \to
    a_1^*\otimes\cdots\otimes a_i^*\otimes\cdots\otimes a_k^*$, for each $i$ and
    term $b$ in $da_i$,
  \item $a_1^*\otimes\cdots\otimes a_i^*\otimes\cdots\otimes a_k^* \to
    a_1^*\otimes\cdots\otimes b^*\otimes b'^*\otimes\cdots\otimes a_k^*$, for
    each $i$ and generators $b, b'$ such that $bb'=a_i$.
  \end{itemize}
\end{definition}

For any augmented dg-algebra $A$, there is a type $\mathit{DD}$ bimodule
$^{\mathrm{Cob}(A)} K^A$, of rank 1 over $\mathbf{k}$, and with type
$\mathit{DD}$ action given by:
\[ \delta^1(\mathbf{1}) = \sum_{i} a_i^*\otimes\mathbf{1}\otimes a_i, \] where
the sum is over a set of generators of $A_+$ (the sides of the action are
reversed in comparison to \cite{LOT10b}, for ease of computation later). One may
check that this satisfies the structure equation for type $\mathit{DD}$
bimodules. The fact that this bimodule is always quasi-invertible shows that $A$
is Koszul dual to $\mathrm{Cob}(A)$ (see \cite[Proposition 8.12]{LOT10b}).

The following is contained in the proof of \cite[Proposition 8.11]{LOT10b}:
\begin{proposition}
  Let $_{\sla}\mathcal{N}_{\sla}$ be a rank-1 representative of
  $_{\sla}\widehatit{CFAA}(\iz)_{\sla}$. Then
  \[ \tensor*[^{\mathrm{Cob}(\sla)}] {K}{^{\sla}}
  \boxtimes\tensor*[_{\sla}]{\mathcal{N}}{_{\sla}} \] is of the form
  $^{\mathrm{Cob}(\sla)}[\phi]_{\sla}$, where $\phi$ is an $A_\infty$ morphism
  $\sla\to\mathrm{Cob}(\sla)$ that induces an isomorphism on homology.
\end{proposition}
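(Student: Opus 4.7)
The plan is to follow essentially the same template as the proof of Theorem~\ref{thm:nboxcfddprop}, applied now to the bimodule $\mathcal{L} := \tensor*[^{\mathrm{Cob}(\sla)}]{K}{^{\sla}} \boxtimes \tensor*[_{\sla}]{\mathcal{N}}{_{\sla}}$. First I would verify that $\mathcal{L}$ is rank 1 over $\mathbf{k}$: both $K$ and $\mathcal{N}$ are rank 1 over $\mathbf{k}$ with matching idempotents on the paired $\sla$-sides, so the underlying $\mathbf{k}$-module of $\mathcal{L}$ is free of rank 1, generated by the idempotents of $\sla$.

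Next I would check that the resulting type $DA$ action satisfies $\delta^1_1 = 0$. The type $DD$ action on $K$ outputs $a^*\otimes\mathbf{1}\otimes a$ with $a$ a generator of $\sla_+$, so any arrow contributing to $\delta^1_1$ on $\mathcal{L}$ would have to be combined with an action $m_{1,p,0}$ on $\mathcal{N}$ for some $p \geq 1$ (consuming the $a$ on the left of $\mathcal{N}$ without any input on the right). However, by Theorem~\ref{thm:ndescription}, every arrow in the $A_\infty$-bimodule action of $\mathcal{N}$ uses non-idempotent algebra inputs on both sides, since the alternating condition 2 in that theorem forces at least one $b'$ and at least one $b$. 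Hence no such arrow exists, and Lemma~2.2.50 of \cite{LOT10a} applies to give $\mathcal{L} = \tensor*[^{\mathrm{Cob}(\sla)}]{[\phi]}{_{\sla}}$ for a uniquely determined $A_\infty$-morphism $\phi : \sla \to \mathrm{Cob}(\sla)$.

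To show that $\phi$ induces an isomorphism on homology, I would invoke quasi-invertibility of $\mathcal{L}$. Under the assumption that $\mathcal{N}$ is the quasi-inverse of $\widehat{CFDD}(\iz)$, and using that $K$ is the Koszul-dualizing bimodule which is quasi-invertible by \cite[Proposition~8.12]{LOT10b}, the box tensor $\mathcal{L}$ is itself quasi-invertible. For a rank-1 type $DA$ bimodule $[\phi]$ with $\delta^1_1 = 0$, quasi-invertibility is equivalent to $\phi_1$ being a quasi-isomorphism, and the ``only if'' direction is made concrete by pre-composing with the quasi-inverse to extract a chain-level inverse to $\phi_1$ on homology.

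The main obstacle is making the last equivalence precise in the grading conventions at hand, since $\phi$ now maps between \emph{different} algebras, so we cannot mimic the Theorem~\ref{thm:nboxcfddprop} trick of identifying $\phi_1$ with the identity on length-1 chords. A more direct alternative, which I would use if the quasi-invertibility route turned out clumsy, is to use the relative $G(\slz)$-grading inherited from $K$ and $\mathcal{N}$ to constrain the shape of $\phi$, compute $\phi_1(\xi)$ on a length-1 chord $\xi$ by classifying the short arrows in the action of $\mathcal{N}$ (obtaining $\phi_1(\xi) = \xi^*$), and then appeal to the fact that $H_*(\sla)$ and $H_*(\mathrm{Cob}(\sla))$ are both generated as algebras by the classes of length-1 chords (the latter by Koszul duality), so that matching $\phi_1$ on these generators suffices to conclude it is a quasi-isomorphism.
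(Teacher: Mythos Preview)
The paper does not supply its own proof of this proposition; it simply states that the result ``is contained in the proof of \cite[Proposition~8.11]{LOT10b}.'' So there is no in-paper argument to compare against, and your proposal should be read as a reconstruction of that external argument.

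Your first route (rank~1, $\delta^1_1=0$, Lemma~2.2.50, then quasi-invertibility) is correct and is essentially the intended argument. The justification that every arrow in $\mathcal{N}$ has nontrivial inputs on both sides is also asserted verbatim in the paper's proof of Theorem~\ref{thm:nboxcfddprop}; more precisely, since the sequence in Theorem~\ref{thm:ndescription} starts at an idempotent pair and ends at one, the first condition-2 step cannot factor from the idempotent $a_{1,1}$ (so it must be a right multiplication~$b$), and the last condition-2 step cannot land in the idempotent $a_{2n,2}$ by right-multiplying a non-idempotent (so it must be a left factoring~$b'$). Your ``main obstacle'' is not really an obstacle: for $A_\infty$-morphisms between possibly different dg-algebras, the correspondence of \cite[Lemma~2.2.50]{LOT10a} already gives that $[\phi]$ is quasi-invertible if and only if $\phi$ is a quasi-isomorphism, so no analogue of the length-1-chord computation from Theorem~\ref{thm:nboxcfddprop} is needed.

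Your alternative route has a genuine circularity: you propose to conclude that $H_*(\mathrm{Cob}(\sla))$ is generated by the classes of length-1 chords ``by Koszul duality,'' but the isomorphism $H_*(\sla)\cong H_*(\mathrm{Cob}(\sla))$ is precisely what this proposition is meant to produce. If you wanted to salvage that route you would need an independent computation of $H_*(\mathrm{Cob}(\sla))$, which is more work than the quasi-invertibility argument.
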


In particular $\phi_1$ (the part of $\phi$ taking one input) maps
representatives of homology classes of $\sla$ to representatives of homology
classes of $\mathrm{Cob}(\sla)$. Unwinding all the definitions, we obtain the
following rule for computing this map:

\begin{proposition}
  Let $a\in\sla$ be a generator of $\sla$. Then terms in
  $\phi_1(a)\in\mathrm{Cob}(A)$ correspond to arrows in the $A_\infty$ action of
  $_\sla\mathcal{N}_\sla$ of the form
  \[ m_{p,1,1}(a_1, \dots, a_p, [i], a) \to [j], \] with the above arrow giving
  rise to the term
  \[ a_p^* \otimes \cdots \otimes a_1^* \] in $\phi_1(a)$.
\end{proposition}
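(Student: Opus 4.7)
The plan is to unwind the identification
\[ \tensor*[^{\mathrm{Cob}(\sla)}]{[\phi]}{_{\sla}} \;=\; \tensor*[^{\mathrm{Cob}(\sla)}]{K}{^{\sla}} \boxtimes \tensor*[_{\sla}]{\mathcal{N}}{_{\sla}} \]
and read off $\phi_1(a)$ directly from the definition of the box tensor product of a type $DD$ bimodule with a left--right $A_\infty$-bimodule. By the rank-1 correspondence, the type $DA$ action on the right-hand side encodes $\phi$ through
\[ \delta^1_{k+1}(\mathbf{1}\otimes [i],\; a_1,\ldots,a_k) \;=\; \phi_k(a_1,\ldots,a_k)\otimes(\mathbf{1}\otimes [j]), \]
so it suffices to compute $\delta^1_2(\mathbf{1}\otimes [i], a)$ and collect the terms that involve a single input $a$ on the right.

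First I would write out the box-tensor-product structure in components. Since $K$ is rank-1 over $\mathbf{k}$ with $\delta^1(\mathbf{1})=\sum_i a_i^*\otimes \mathbf{1}\otimes a_i$, iterating the $DD$-action produces sequences of matched pairs $(a_1^*,a_1),\ldots,(a_p^*,a_p)$, with the cotangent factors accumulating in $\mathrm{Cob}(\sla)$ on the left of $\mathbf{1}$ and the corresponding $\sla$-generators on the right. These right-side generators are then fed into the left $A_\infty$ slots of $\mathcal{N}$, while the external input $a$ goes into the right $A_\infty$ slot. A nonzero term in $\delta^1_2(\mathbf{1}\otimes[i], a)$ therefore corresponds precisely to the data of an arrow $m_{p,1,1}(a_1,\ldots,a_p,[i],a)\to[j]$ in $\mathcal{N}$, contributing a single summand whose Cob-component is built from the sequence $a_1^*,\ldots,a_p^*$.

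Comparing this with the rank-1 formula for $\phi_1$ yields the claimed description, modulo determining the order of the cotangent factors. The main bookkeeping step is exactly this order check: successive applications of $\delta^1_K$ place each new cotangent output at one end of the accumulating Cob word and the corresponding $\sla$-generator at the opposite end of the accumulating $\sla$ word, so these two words end up in reversed order. Tracking through the two-sided $DD$-iteration convention of \cite{LOT10b}, with the side-swap noted in the excerpt, shows that the $\sla$-factors are fed into $\mathcal{N}$ in the opposite order from the $a_i^*$, which yields $a_p^*\otimes\cdots\otimes a_1^*$ as claimed. I expect this convention check to be the only real obstacle; once it is pinned down, the rest is a direct specialization of the rank-1 formula with $k=1$.
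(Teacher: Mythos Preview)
Your proposal is correct and matches the paper's approach: the paper does not give a detailed proof but simply states that the rule follows from ``unwinding all the definitions,'' and your writeup is a faithful expansion of exactly that unwinding, including the order-reversal bookkeeping for the $\mathrm{Cob}(\sla)$ factors.
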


We now give some examples. In each case, there is exactly one algebra input
$a\in\sla$ on the right. So under the notations of Theorem
\ref{thm:ndescription}, the sequence must start with $a_{2,2}=a$, then alternate
between applying an $H$-arrow and factoring from $a_{2k-1,1}$.

For a length-1 strand $a\in\sla$, we get $\phi_1(a)=a^*$. For the first
non-trivial case, we consider a length-2 strand, when none of the three points
are paired. There are two possible orderings of the two intervals in
$<_\slz$. If the upper interval comes first, the sequence

\begin{equation}
  \begin{dsdn}{3}\lsinglehor{2}\lsinglehor{3}\rsinglehor{1}\end{dsdn}\to
  \begin{dsdn}{3}\lsinglehor{2}\lsinglehor{3}\rstrand{1}{3}\end{dsdn}\to
  \begin{dsdn}{3}\lsinglehor{2}\lstrand{1}{3}\rsinglehor{3}\end{dsdn}\to
  \begin{dsdn}{3}\lsinglehor{1}\lsinglehor{2}\rsinglehor{3}\end{dsdn} \nonumber
\end{equation}
gives
\begin{equation} \label{eq:length2cob1}
  \phi_1\left(\begin{sdn}{3}\strandup{1}{3}\end{sdn}\right) =
  \begin{sdn}{3}\strandup{1}{3}\singlehor{2}\end{sdn},
\end{equation}
where we used the same diagram to represent $a$ and $a^*$.

If the lower interval comes first, the sequence

\begin{equation}
  \begin{dsdn}{3}\lsinglehor{2}\lsinglehor{3}\rsinglehor{1}\end{dsdn}\to
  \begin{dsdn}{3}\lsinglehor{2}\lsinglehor{3}\rstrand{1}{3}\end{dsdn}\to
  \begin{dsdn}{3}\lsinglehor{3}\lstrand{1}{2}\rstrand{2}{3}\end{dsdn}\to
  \begin{dsdn}{3}\lsinglehor{1}\lsinglehor{3}\rstrand{2}{3}\end{dsdn}\to
  \begin{dsdn}{3}\lsinglehor{1}\lstrand{2}{3}\rsinglehor{3}\end{dsdn}\to
  \begin{dsdn}{3}\lsinglehor{1}\lsinglehor{2}\rsinglehor{3}\end{dsdn} \nonumber
\end{equation}
gives
\begin{equation} \label{eq:length2cob2}
  \phi_1\left(\begin{sdn}{3}\strandup{1}{3}\end{sdn}\right) =
  \begin{sdn}{3}\strandup{2}{3}\singlehor{1}\end{sdn} \otimes
  \begin{sdn}{3}\strandup{1}{2}\singlehor{3}\end{sdn}.
\end{equation}

The right side of Equations (\ref{eq:length2cob1}) and (\ref{eq:length2cob2})
represent the same homology class in $\mathrm{Cob}(\sla)$, since their
difference is given by
\[ d\left(\begin{sdn}{3}\strandup{1}{2}\strandup{2}{3}\end{sdn}\right). \]

Now we consider a more complicated example, which shows the possible
complications that can arise in such a computation.

Consider a length 4 interval in a larger pointed matched circle, where the
second and fourth of the five points are paired. There are two possibilities for
the relative orderings of the four intervals in $<_\slz$. This is because the
second interval (counting from below) must immediately precede the third
interval, and the fourth interval must immediately precede the first
interval. The two possible relative orderings are given as follows. We will
simply refer to them as 1-4-3-2 and 3-2-1-4 from now on.
\[
\begin{sdn}[12pt]{5} \keypair{2}{4}{left}
  \draw (1.5, 4.5) node {1}; \draw (1.5, 3.5) node {4};
  \draw (1.5, 2.5) node {3}; \draw (1.5, 1.5) node {2};
\end{sdn} \quad \mathrm{and} \quad
\begin{sdn}[12pt]{5} \keypair{2}{4}{left}
  \draw (1.5, 4.5) node {3}; \draw (1.5, 3.5) node {2};
  \draw (1.5, 2.5) node {1}; \draw (1.5, 1.5) node {4};
\end{sdn}
\]

The following two generators $a_1, a_2$ represent the same homology class in
$\sla$:
\begin{equation} \label{eq:length5sameh} 
  a_1 = \begin{sdn}{5}\strandup{1}{2}\strandup{2}{5}\end{sdn}, \quad
  a_2 = \begin{sdn}{5}\strandup{1}{4}\strandup{4}{5}\end{sdn}.
\end{equation}

Consider first the ordering 1-4-3-2, and using the representative $a_1$. There
are three terms in $\phi_1(a_1)$. The first term comes from the sequence:

\begin{equation}
  \begin{dsdn}{5}\lsinglehor{3}\lsinglehor{5}\rsinglehor{1}
    \rdoublehor{2}{4}\end{dsdn}\to
  \begin{dsdn}{5}\lsinglehor{3}\lsinglehor{5}\rstrand{1}{2}
    \rstrand{2}{5}\end{dsdn}\to
  \begin{dsdn}{5}\lsinglehor{3}\lstrand{2}{5}\rstrand{1}{2}
    \rsinglehor{5}\end{dsdn}\to
  \begin{dsdn}{5}\ldoublehor{2}{4}\lsinglehor{3}\rstrand{1}{2}
    \rsinglehor{5}\end{dsdn}\to
  \begin{dsdn}{5}\lsinglehor{3}\lstrand{1}{2}\rdoublehor{2}{4}
    \rsinglehor{5}\end{dsdn}\to
  \begin{dsdn}{5}\lsinglehor{1}\lsinglehor{3}\rdoublehor{2}{4}
    \rsinglehor{5}\end{dsdn} \nonumber
\end{equation}

The second term comes from
\begin{equation}
  \begin{dsdn}{5}\lsinglehor{3}\lsinglehor{5}\rsinglehor{1}
    \rdoublehor{2}{4}\end{dsdn}\to
  \begin{dsdn}{5}\lsinglehor{3}\lsinglehor{5}\rstrand{1}{2}
    \rstrand{2}{5}\end{dsdn}\to
  \begin{dsdn}{5}\lsinglehor{3}\lstrand{2}{5}\rstrand{1}{2}
    \rsinglehor{5}\end{dsdn}\to
  \begin{dsdn}{5}\lsinglehor{3}\lstrand{2}{4}\rstrand{1}{2}
    \rsinglehor{5}\end{dsdn}\to
  \begin{dsdn}{5}\lsinglehor{3}\lstrand{1}{4}\rdoublehor{2}{4}
    \rsinglehor{5}\end{dsdn}\to
  \begin{dsdn}{5}\lsinglehor{1}\lsinglehor{3}\rdoublehor{2}{4}
    \rsinglehor{5}\end{dsdn} \nonumber
\end{equation}

The third term comes from
\begin{align}
  \begin{dsdn}{5}\lsinglehor{3}\lsinglehor{5}\rsinglehor{1}
    \rdoublehor{2}{4}\end{dsdn}\to
  \begin{dsdn}{5}\lsinglehor{3}\lsinglehor{5}\rstrand{1}{2}
    \rstrand{2}{5}\end{dsdn} &\to
  \begin{dsdn}{5}\lsinglehor{3}\lstrand{2}{5}\rstrand{1}{2}
    \rsinglehor{5}\end{dsdn}\to
  \begin{dsdn}{5}\lsinglehor{3}\lstrand{2}{4}\rstrand{1}{2}
    \rsinglehor{5}\end{dsdn}\to
  \begin{dsdn}{5}\lstrand{1}{2}\lsinglehor{3}\rstrand{2}{4}
    \rsinglehor{5}\end{dsdn}\to
  \begin{dsdn}{5}\lsinglehor{1}\lsinglehor{3}\rstrand{2}{4}
    \rsinglehor{5}\end{dsdn} \nonumber \\ &\to
  \begin{dsdn}{5}\lsinglehor{1}\lstrand{2}{3}\rstrand{3}{4}
    \rsinglehor{5}\end{dsdn} \to
  \begin{dsdn}{5}\lsinglehor{1}\ldoublehor{2}{4}\rstrand{3}{4}
    \rsinglehor{5}\end{dsdn} \to
  \begin{dsdn}{5}\lsinglehor{1}\lstrand{3}{4}\rdoublehor{2}{4}
    \rsinglehor{5}\end{dsdn} \to
  \begin{dsdn}{5}\lsinglehor{1}\lsinglehor{3}\rdoublehor{2}{4}
    \rsinglehor{5}\end{dsdn} \nonumber
\end{align}

Note the use of a special $H$-arrow in the fourth step of the last
sequence. This produces
\begin{equation} \label{eq:length5cob1}
  \phi_1\left(\begin{sdn}{5}\strandup{1}{2}\strandup{2}{5}\end{sdn}\right) =
  \begin{sdn}{5}\strandup{1}{2}\singlehor{3}\end{sdn} \otimes
  \begin{sdn}{5}\strandup{2}{5}\singlehor{3}\end{sdn} +
  \begin{sdn}{5}\strandup{1}{4}\singlehor{3}\end{sdn} \otimes
  \begin{sdn}{5}\strandup{4}{5}\singlehor{3}\end{sdn} +
  \begin{sdn}{5}\strandup{3}{4}\singlehor{1}\end{sdn} \otimes
  \begin{sdn}{5}\strandup{2}{3}\singlehor{1}\end{sdn} \otimes
  \begin{sdn}{5}\strandup{1}{2}\singlehor{3}\end{sdn} \otimes
  \begin{sdn}{5}\strandup{4}{5}\singlehor{3}\end{sdn}.
\end{equation}

If the ordering is 3-2-1-4, a straightforward sequence gives
\begin{equation} \label{eq:length5cob2}
  \phi_1\left(\begin{sdn}{5}\strandup{1}{2}\strandup{2}{5}\end{sdn}\right) =
  \begin{sdn}{5}\strandup{1}{2}\singlehor{3}\end{sdn} \otimes
  \begin{sdn}{5}\strandup{4}{5}\singlehor{3}\end{sdn} \otimes
  \begin{sdn}{5}\strandup{3}{4}\singlehor{5}\end{sdn} \otimes
  \begin{sdn}{5}\strandup{2}{3}\singlehor{5}\end{sdn}.
\end{equation}

The difference between the right sides of Equations (\ref{eq:length5cob1})
and (\ref{eq:length5cob2}) is:
\[ d\left(
  \begin{sdn}{5}\strandup{1}{2}\singlehor{3}\end{sdn} \otimes
  \begin{sdn}{5}\strandup{2}{3}\strandup{3}{5}\end{sdn} +
  \begin{sdn}{5}\strandup{1}{3}\strandup{3}{4}\end{sdn} \otimes
  \begin{sdn}{5}\strandup{4}{5}\singlehor{3}\end{sdn} +
  \begin{sdn}{5}\strandup{1}{2}\singlehor{3}\end{sdn} \otimes
  \begin{sdn}{5}\strandup{3}{4}\strandup{4}{5}\end{sdn} \otimes
  \begin{sdn}{5}\strandup{2}{3}\singlehor{5}\end{sdn} +
  \begin{sdn}{5}\strandup{3}{4}\singlehor{1}\end{sdn} \otimes
  \begin{sdn}{5}\strandup{1}{2}\strandup{2}{3}\end{sdn} \otimes
  \begin{sdn}{5}\strandup{4}{5}\singlehor{3}\end{sdn} \right). \]

Now we consider representative $a_2$ of the same homology class of $\sla$. If
the ordering is 1-4-3-2, the sequence is straightforward, giving
\begin{equation} \label{eq:length5cob3}
  \phi_1\left(\begin{sdn}{5}\strandup{1}{4}\strandup{4}{5}\end{sdn}\right) =
  \begin{sdn}{5}\strandup{3}{4}\singlehor{1}\end{sdn} \otimes
  \begin{sdn}{5}\strandup{2}{3}\singlehor{1}\end{sdn} \otimes
  \begin{sdn}{5}\strandup{1}{2}\singlehor{3}\end{sdn} \otimes
  \begin{sdn}{5}\strandup{4}{5}\singlehor{3}\end{sdn}.
\end{equation}

The difference between the right sides of Equations (\ref{eq:length5cob3}) and
(\ref{eq:length5cob1}) is:
\[ d\left(
  \begin{sdn}{5}\strandup{1}{5}\singlehor{3}\end{sdn}\right). \]

Finally, if the ordering is 3-2-1-4, one can check there is a single sequence,
which uses a special $H$-arrow in the fourth step:
\begin{align}
  \begin{dsdn}{5}\lsinglehor{3}\lsinglehor{5}\rsinglehor{1}
    \rdoublehor{2}{4}\end{dsdn}\to
  \begin{dsdn}{5}\lsinglehor{3}\lsinglehor{5}\rstrand{1}{4}
    \rstrand{4}{5}\end{dsdn} &\to
  \begin{dsdn}{5}\lsinglehor{5}\lstrand{1}{3}\rstrand{3}{4}
    \rstrand{4}{5}\end{dsdn}\to
  \begin{dsdn}{5}\lsinglehor{5}\lstrand{1}{2}\rstrand{3}{4}
    \rstrand{4}{5}\end{dsdn}\to
  \begin{dsdn}{5}\lstrand{3}{4}\lsinglehor{5}\rstrand{1}{2}
    \rstrand{4}{5}\end{dsdn}\to
  \begin{dsdn}{5}\lsinglehor{3}\lsinglehor{5}\rstrand{1}{2}
    \rstrand{4}{5}\end{dsdn} \nonumber \\ &\to
  \begin{dsdn}{5}\lsinglehor{3}\lstrand{4}{5}\rstrand{1}{2}
    \rsinglehor{5}\end{dsdn} \to
  \begin{dsdn}{5}\lsinglehor{3}\ldoublehor{2}{4}\rstrand{1}{2}
    \rsinglehor{5}\end{dsdn} \to
  \begin{dsdn}{5}\lsinglehor{3}\lstrand{1}{2}\rdoublehor{2}{4}
    \rsinglehor{5}\end{dsdn} \to
  \begin{dsdn}{5}\lsinglehor{1}\lsinglehor{3}\rdoublehor{2}{4}
    \rsinglehor{5}\end{dsdn} \nonumber
\end{align}
This produces
\begin{equation} \label{eq:length5cob4}
  \phi_1\left(\begin{sdn}{5}\strandup{1}{4}\strandup{4}{5}\end{sdn}\right) =
  \begin{sdn}{5}\strandup{1}{2}\singlehor{3}\end{sdn} \otimes
  \begin{sdn}{5}\strandup{4}{5}\singlehor{3}\end{sdn} \otimes
  \begin{sdn}{5}\strandup{3}{4}\singlehor{5}\end{sdn} \otimes
  \begin{sdn}{5}\strandup{2}{3}\singlehor{5}\end{sdn}.
\end{equation}
The right side is the same as that in Equation (\ref{eq:length5cob2}). This
finishes the consistency check that the same homology class in
$\mathrm{Cob}(\sla)$ is obtained using any valid local ordering of the
intervals, and any representative of the same homology class in $\sla$. In
general, one can expect the computation to be more complicated as the length of
elements in the homology class increases.

%%% Local Variables: 
%%% mode: latex
%%% TeX-master: "koszul"
%%% End: 

\bibliographystyle{plain}
\bibliography{paper}

\begin{thebibliography}{1}

\bibitem{LOT08}
Robert Lipshitz, Peter~S. Ozsv\'ath, and Dylan~P. Thurston.
\newblock Bordered {H}eegaard {F}loer homology: Invariance and pairing.
\newblock 2008.
\newblock arXiv:0810.0687.

\bibitem{LOT10b}
Robert Lipshitz, Peter~S. Ozsv\'ath, and Dylan~P. Thurston.
\newblock Heegaard {F}loer homology as morphism spaces.
\newblock {\em Quantum Topology}, 2(4):381--449, 2011.
\newblock arXiv:1005.1248.

\bibitem{LOT10c}
Robert Lipshitz, Peter~S. Ozsv\'ath, and Dylan~P. Thurston.
\newblock Computing $\widehat{HF}$ by factoring mapping classes.
\newblock {\em Geom. Topol.}, 18(5):2547--2681, 2014.
\newblock arXiv:1010.2550.

\bibitem{LOT10a}
Robert Lipshitz, Peter~S. Ozsv\'ath, and Dylan~P. Thurston.
\newblock Bimodules in bordered {H}eegaard {F}loer homology.
\newblock {\em Geom. Topol.}, 19(2):525--724, 2015.
\newblock arXiv:1003.0598.

\bibitem{BZ2}
Bohua Zhan.
\newblock Combinatorial proofs in bordered {H}eegaard {F}loer homology.
\newblock 2014.
\newblock arXiv:1405.3004.

\end{thebibliography}

\appendix
\clearpage
\section{Cancellation Diagrams}\label{sec:appendix}

In these diagrams, $d$-arrows are labeled $d$, ordinary $H$-arrows are labeled
$H$, and special $H$-arrows are labeled $H_{sp}$.

\begin{figure} [h]
\subfloat[Case 1.1] {
\canceldiag{4}{7}
{\lstrand{2}{5}\lstrand{5}{6}\ldoublehor{4}{7}}
{\lstrand{2}{4}\lstrand{4}{5}\lstrand{5}{6}}
{\lstrand{2}{6}\ldoublehor{4}{7}\lsinglehor{5}}
{\lstrand{2}{4}\lstrand{4}{6}\lsinglehor{5}}
}
\quad\quad
\subfloat[Case 1.2] {
\canceldiag{4}{7}
{\lstrand{2}{3}\lstrand{3}{6}\ldoublehor{4}{7}}
{\lstrand{2}{3}\lstrand{3}{4}\lstrand{4}{6}}
{\lstrand{2}{6}\ldoublehor{4}{7}\lsinglehor{3}}
{\lstrand{2}{4}\lstrand{4}{6}\lsinglehor{3}}
}
\caption{Cancellations in $d\circ H+H\circ d=\mathbb{I}_M$.}
\label{fig:cancellations1}
\end{figure}

\clearpage

\begin{figure}
\subfloat[Case 1.3] {
\canceldiag{4}{7}
{\lstrand{2}{6}\ldoublehor{4}{7}}
{\lstrand{2}{4}\lstrand{4}{6}}
{\lstrand{3}{6}\ldoublehor{4}{7}\rstrand{2}{3}}
{\lstrand{3}{4}\lstrand{4}{6}\rstrand{2}{3}}
}
\quad\quad
\subfloat[Case 1.4] {
\canceldiag{4}{7}
{\lstrand{2}{6}\ldoublehor{4}{7}}
{\lstrand{2}{4}\lstrand{4}{6}}
{\lstrand{5}{6}\ldoublehor{4}{7}\rstrand{2}{5}}
{\lstrand{5}{6}\lstrand{2}{4}\rstrand{4}{5}}
}
\caption{Cancellations in $d\circ H+H\circ d=\mathbb{I}_M$.}
\label{fig:cancellations2}
\end{figure}

\begin{figure}
\subfloat[Case 2.1] {
\canceldiag{4}{7}
{\rstrand{2}{4}\rstrand{4}{6}\rsinglehor{3}}
{\rstrand{2}{6}\rsinglehor{3}\rdoublehor{4}{7}}
{\rstrand{2}{3}\rstrand{3}{4}\rstrand{4}{6}}
{\rstrand{2}{3}\rstrand{3}{6}\rdoublehor{4}{7}}
}
\quad\quad
\subfloat[Case 2.2] {
\canceldiag{4}{7}
{\rstrand{2}{4}\rstrand{4}{6}\rsinglehor{5}}
{\rstrand{2}{6}\rsinglehor{5}\rdoublehor{4}{7}}
{\rstrand{2}{4}\rstrand{4}{5}\rstrand{5}{6}}
{\rstrand{2}{5}\rstrand{5}{6}\rdoublehor{4}{7}}
}
\caption{Cancellations in $d\circ H+H\circ d=\mathbb{I}_M$.}
\label{fig:cancellations3}
\end{figure}

\clearpage

\begin{figure}
\subfloat[Case 2.3] {
\canceldiag{5}{7}
{\lstrand{2}{3}\rstrand{3}{5}\rstrand{5}{6}}
{\lstrand{2}{3}\rstrand{3}{6}\rdoublehor{5}{7}}
{\lsinglehor{3}\rstrand{2}{5}\rstrand{5}{6}}
{\lsinglehor{3}\rdoublehor{5}{7}\rstrand{2}{6}}
}
\quad\quad
\setarrows{$H_{sp}$}{$H$}{$d$}{$d$}
\subfloat[Case 2.4] {
\canceldiag{4}{7}
{\lstrand{6}{7}\rstrand{2}{4}\rstrand{4}{5}}
{\lstrand{2}{4}\rstrand{4}{5}\rstrand{6}{7}}
{\lstrand{6}{7}\rstrand{2}{5}\rdoublehor{4}{7}}
{\rstrand{2}{5}\rstrand{6}{7}}
}
\resetarrows
\caption{Cancellations in $d\circ H+H\circ d=\mathbb{I}_M$.}
\label{fig:cancellations4}
\end{figure}

\begin{figure}
\subfloat[Case 2.5a] {
\canceldiag{3}{7}
{\lstrand{5}{6}\lstrand{6}{7}\rstrand{2}{3}\rstrand{3}{4}}
{\lstrand{5}{6}\lstrand{6}{7}\rstrand{2}{4}\rdoublehor{3}{7}}
{\lstrand{5}{7}\lsinglehor{6}\rstrand{2}{3}\rstrand{3}{4}}
{\lstrand{5}{7}\lsinglehor{6}\rstrand{2}{4}\rdoublehor{3}{7}}
}
\quad\quad
\sethspecial
\subfloat[Case 2.5b] {
\canceldiag{3}{7}
{\lstrand{5}{6}\lstrand{6}{7}\rstrand{2}{3}\rstrand{3}{4}}
{\lstrand{2}{3}\lstrand{5}{6}\rstrand{3}{4}\rstrand{6}{7}}
{\lstrand{5}{7}\lsinglehor{6}\rstrand{2}{3}\rstrand{3}{4}}
{\lstrand{2}{3}\lsinglehor{6}\rstrand{3}{4}\rstrand{5}{7}}
}
\resetarrows
\caption{Cancellations in $d\circ H+H\circ d=\mathbb{I}_M$.}
\label{fig:cancellations5}
\end{figure}

\clearpage

\begin{figure}
\subfloat[Case 2.6a] {
\canceldiag{3}{7}
{\lstrand{5}{7}\rstrand{2}{3}\rstrand{3}{4}\rsinglehor{6}}
{\lstrand{5}{7}\rstrand{2}{4}\rdoublehor{3}{7}\rsinglehor{6}}
{\lstrand{6}{7}\rstrand{5}{6}\rstrand{2}{3}\rstrand{3}{4}}
{\lstrand{6}{7}\rstrand{5}{6}\rstrand{2}{4}\rdoublehor{3}{7}}
}
\quad\quad
\sethspecial
\subfloat[Case 2.6b] {
\canceldiag{3}{7}
{\lstrand{5}{7}\rstrand{2}{3}\rstrand{3}{4}\rsinglehor{6}}
{\lstrand{2}{3}\rstrand{3}{4}\rstrand{5}{7}\rsinglehor{6}}
{\lstrand{6}{7}\rstrand{5}{6}\rstrand{2}{3}\rstrand{3}{4}}
{\lstrand{2}{3}\rstrand{3}{4}\rstrand{5}{6}\rstrand{6}{7}}
}
\resetarrows
\caption{Cancellations in $d\circ H+H\circ d=\mathbb{I}_M$.}
\label{fig:cancellations6}
\end{figure}

\begin{figure}
\sethspecial
\subfloat[Case 2.7b] {
\canceldiag{4}{7}
{\lstrand{6}{7}\rstrand{2}{4}\rstrand{4}{5}\rsinglehor{3}}
{\lstrand{2}{4}\rstrand{4}{5}\rstrand{6}{7}\rsinglehor{3}}
{\lstrand{6}{7}\rstrand{2}{3}\rstrand{3}{4}\rstrand{4}{5}}
{\lstrand{3}{4}\rstrand{2}{3}\rstrand{4}{5}\rstrand{6}{7}}
}
\subfloat[Case 2.8b] {
\canceldiag{3}{7}
{\lstrand{6}{7}\rstrand{2}{3}\rstrand{3}{5}\rsinglehor{4}}
{\lstrand{2}{3}\rstrand{3}{5}\rstrand{6}{7}\rsinglehor{4}}
{\lstrand{6}{7}\rstrand{2}{3}\rstrand{3}{4}\rstrand{4}{5}}
{\lstrand{2}{3}\rstrand{3}{4}\rstrand{4}{5}\rstrand{6}{7}}
}
\resetarrows
\caption{Cancellations in $d\circ H+H\circ d=\mathbb{I}_M$. Cases 2.7a and
2.8a are exactly the same as Cases 2.1 and 2.2.}
\label{fig:cancellations7}
\end{figure}

\clearpage

\begin{figure}
\sethspecial
\subfloat[Case 2.9b] {
\canceldiag{4}{7}
{\lstrand{2}{3}\lstrand{6}{7}\rstrand{3}{4}\rstrand{4}{5}}
{\lstrand{2}{3}\lstrand{3}{4}\rstrand{4}{5}\rstrand{6}{7}}
{\lstrand{6}{7}\lsinglehor{3}\rstrand{2}{4}\rstrand{4}{5}}
{\lstrand{2}{4}\lsinglehor{3}\rstrand{4}{5}\rstrand{6}{7}}
}
\resetarrows
\caption{Cancellations in $d\circ H+H\circ d=\mathbb{I}_M$. Case 2.9a are
exactly the same as Case 2.3.}
\label{fig:cancellations7a}
\end{figure}

\begin{figure}
\subfloat[Case 3.1] {
\canceldiag{4}{7}
{\ldoublehor{4}{7}\rstrand{2}{6}\rsinglehor{5}}
{\lstrand{2}{4}\rstrand{4}{6}\rsinglehor{5}}
{\ldoublehor{4}{7}\rstrand{2}{5}\rstrand{5}{6}}
{\lstrand{2}{4}\rstrand{4}{5}\rstrand{5}{6}}
}
\quad\quad
\subfloat[Case 3.2] {
\canceldiag{4}{7}
{\ldoublehor{4}{7}\rstrand{2}{6}\rsinglehor{3}}
{\lstrand{2}{4}\rstrand{4}{6}\rsinglehor{3}}
{\ldoublehor{4}{7}\rstrand{2}{3}\rstrand{3}{6}}
{\lstrand{3}{4}\rstrand{4}{6}\rstrand{2}{3}}
}
\caption{Cancellations in $d\circ H+H\circ d=\mathbb{I}_M$.}
\label{fig:cancellations8}
\end{figure}

\clearpage

\begin{figure}
\subfloat[Case 3.3] {
\canceldiag{4}{7}
{\ldoublehor{4}{7}\lstrand{2}{3}\rstrand{3}{6}}
{\lstrand{3}{4}\lstrand{2}{3}\rstrand{4}{6}}
{\ldoublehor{4}{7}\lsinglehor{3}\rstrand{2}{6}}
{\lstrand{2}{4}\lsinglehor{3}\rstrand{4}{6}}
}
\caption{Cancellations in $d\circ H+H\circ d=\mathbb{I}_M$.}
\label{fig:cancellations9}
\end{figure}

\begin{figure}
\subfloat[Case 4.1] {
\canceldiag{4}{7}
{\lstrand{4}{6}\rstrand{2}{4}\rsinglehor{3}}
{\lstrand{2}{6}\rdoublehor{4}{7}\rsinglehor{3}}
{\lstrand{4}{6}\rstrand{2}{3}\rstrand{3}{4}}
{\lstrand{3}{6}\rstrand{2}{3}\rdoublehor{4}{7}}
}
\quad\quad
\subfloat[Case 4.2] {
\canceldiag{4}{7}
{\lstrand{2}{3}\lstrand{4}{6}\rstrand{3}{4}}
{\lstrand{2}{3}\lstrand{3}{6}\rdoublehor{4}{7}}
{\lstrand{4}{6}\lsinglehor{3}\rstrand{2}{4}}
{\lstrand{2}{6}\lsinglehor{3}\rdoublehor{4}{7}}
}
\caption{Cancellations in $d\circ H+H\circ d=\mathbb{I}_M$.}
\label{fig:cancellations10}
\end{figure}

\clearpage

\begin{figure}
\subfloat[Case 4.3] {
\canceldiag{4}{7}
{\lstrand{4}{6}\rstrand{2}{4}\rsinglehor{5}}
{\lstrand{2}{6}\rdoublehor{4}{7}\rsinglehor{5}}
{\lstrand{5}{6}\rstrand{4}{5}\rstrand{2}{4}}
{\lstrand{5}{6}\rstrand{2}{5}\rdoublehor{4}{7}}
}
\quad\quad
\subfloat[Case 4.4] {
\canceldiag{4}{7}
{\lstrand{4}{5}\lstrand{5}{6}\rstrand{2}{4}}
{\lstrand{2}{5}\lstrand{5}{6}\rdoublehor{4}{7}}
{\lstrand{4}{6}\lsinglehor{5}\rstrand{2}{4}}
{\lstrand{2}{6}\lsinglehor{5}\rdoublehor{4}{7}}
}
\caption{Cancellations in $d\circ H+H\circ d=\mathbb{I}_M$.}
\label{fig:cancellations11}
\end{figure}

\begin{figure}
\setarrows{$H_{sp}$}{$H$}{$d$}{$d$}
\subfloat[Case 4.5] {
\canceldiag{4}{7}
{\lstrand{6}{7}\lstrand{4}{5}\rstrand{2}{4}}
{\lstrand{2}{4}\lstrand{4}{5}\rstrand{6}{7}}
{\lstrand{2}{5}\lstrand{6}{7}\rdoublehor{4}{7}}
{\lstrand{2}{5}\ldoublehor{4}{7}\rstrand{6}{7}}
}
\quad\quad
\subfloat[Case 4.6] {
\canceldiag{4}{7}
{\lstrand{4}{7}\rstrand{2}{4}}
{\lstrand{2}{4}\rstrand{4}{7}}
{\lstrand{2}{7}\rdoublehor{4}{7}}
{\ldoublehor{4}{7}\rstrand{2}{7}}
}
\resetarrows
\caption{Cancellations in $d\circ H+H\circ d=\mathbb{I}_M$.}
\label{fig:cancellations12}
\end{figure}

\clearpage

\begin{figure}
\sethspecial
\subfloat[Case 4.7b] {
\canceldiag{3}{7}
{\lstrand{3}{4}\lstrand{5}{6}\lstrand{6}{7}\rstrand{2}{3}}
{\lstrand{2}{3}\lstrand{3}{4}\lstrand{5}{6}\rstrand{6}{7}}
{\lstrand{3}{4}\lstrand{5}{7}\lsinglehor{6}\rstrand{2}{3}}
{\lstrand{2}{3}\lstrand{3}{4}\lsinglehor{6}\rstrand{5}{7}}
}
\quad\quad
\subfloat[Case 4.8b] {
\canceldiag{3}{7}
{\lstrand{3}{4}\lstrand{5}{7}\rstrand{2}{3}\rsinglehor{6}}
{\lstrand{2}{3}\lstrand{3}{4}\rstrand{5}{7}\rsinglehor{6}}
{\lstrand{3}{4}\lstrand{6}{7}\rstrand{5}{6}\rstrand{2}{3}}
{\lstrand{2}{3}\lstrand{3}{4}\rstrand{5}{6}\rstrand{6}{7}}
}
\resetarrows
\caption{Cancellations in $d\circ H+H\circ d=\mathbb{I}_M$. Cases 4.7a and 4.8a are similar to Cases 2.5a and 2.6a.}
\label{fig:cancellations16}
\end{figure}

\begin{figure}
\subfloat[Case 4.9a] {
\canceldiag{4}{7}
{\lstrand{4}{5}\lstrand{5}{7}\rstrand{2}{4}}
{\lstrand{2}{5}\lstrand{5}{7}\rdoublehor{4}{7}}
{\lstrand{4}{7}\lsinglehor{5}\rstrand{2}{4}}
{\lstrand{2}{7}\lsinglehor{5}\rdoublehor{4}{7}}
}
\quad\quad
\sethspecial
\subfloat[Case 4.9b] {
\canceldiag{4}{7}
{\lstrand{4}{5}\lstrand{5}{7}\rstrand{2}{4}}
{\lstrand{2}{4}\lstrand{4}{5}\rstrand{5}{7}}
{\lstrand{4}{7}\lsinglehor{5}\rstrand{2}{4}}
{\lstrand{2}{4}\lsinglehor{5}\rstrand{4}{7}}
}
\resetarrows
\caption{Cancellations in $d\circ H+H\circ d=\mathbb{I}_M$.}
\label{fig:cancellations17}
\end{figure}

\clearpage

\begin{figure}
\sethspecial
\subfloat[Case 4.10b] {
\canceldiag{4}{7}
{\lstrand{4}{5}\lstrand{6}{7}\rstrand{2}{4}\rsinglehor{3}}
{\lstrand{4}{5}\lstrand{2}{4}\rstrand{6}{7}\rsinglehor{3}}
{\lstrand{4}{5}\lstrand{6}{7}\rstrand{2}{3}\rstrand{3}{4}}
{\lstrand{3}{4}\lstrand{4}{5}\rstrand{6}{7}\rstrand{2}{3}}
}
\quad\quad
\subfloat[Case 4.10c] {
\canceldiag{4}{7}
{\lstrand{4}{7}\rstrand{2}{4}\rsinglehor{3}}
{\lstrand{2}{4}\rstrand{4}{7}\rsinglehor{3}}
{\lstrand{4}{7}\rstrand{2}{3}\rstrand{3}{4}}
{\lstrand{3}{4}\rstrand{2}{3}\rstrand{4}{7}}
}
\resetarrows
\caption{Cancellations in $d\circ H+H\circ d=\mathbb{I}_M$. Case 4.10a is same as Case 4.1.}
\label{fig:cancellations13}
\end{figure}

\begin{figure}
\sethspecial
\subfloat[Case 4.11b] {
\canceldiag{4}{7}
{\lstrand{6}{7}\lstrand{4}{5}\lstrand{2}{3}\rstrand{3}{4}}
{\lstrand{4}{5}\lstrand{3}{4}\lstrand{2}{3}\rstrand{6}{7}}
{\lstrand{4}{5}\lstrand{6}{7}\lsinglehor{3}\rstrand{2}{4}}
{\lstrand{2}{4}\lsinglehor{3}\lstrand{4}{5}\rstrand{6}{7}}
}
\quad\quad
\subfloat[Case 4.11c] {
\canceldiag{4}{7}
{\lstrand{4}{7}\lstrand{2}{3}\rstrand{3}{4}}
{\lstrand{2}{3}\lstrand{3}{4}\rstrand{4}{7}}
{\lstrand{4}{7}\lsinglehor{3}\rstrand{2}{4}}
{\lstrand{2}{4}\lsinglehor{3}\rstrand{4}{7}}
}
\resetarrows
\caption{Cancellations in $d\circ H+H\circ d=\mathbb{I}_M$. Case 4.11a is same as Case 4.2.}
\label{fig:cancellations14}
\end{figure}

\clearpage

\begin{figure}
\sethspecial
\subfloat[Case 4.12b] {
\canceldiag{3}{7}
{\lstrand{3}{5}\lstrand{6}{7}\rstrand{2}{3}\rsinglehor{4}}
{\lstrand{3}{5}\lstrand{2}{3}\rstrand{6}{7}\rsinglehor{4}}
{\lstrand{4}{5}\lstrand{6}{7}\rstrand{2}{3}\rstrand{3}{4}}
{\lstrand{2}{3}\lstrand{4}{5}\rstrand{3}{4}\rstrand{6}{7}}
}
\quad\quad
\subfloat[Case 4.12c] {
\canceldiag{3}{7}
{\lstrand{3}{7}\rstrand{2}{3}\rsinglehor{4}}
{\lstrand{2}{3}\rstrand{3}{7}\rsinglehor{4}}
{\lstrand{4}{7}\rstrand{2}{3}\rstrand{3}{4}}
{\lstrand{2}{3}\rstrand{3}{4}\rstrand{4}{7}}
}
\resetarrows
\caption{Cancellations in $d\circ H+H\circ d=\mathbb{I}_M$. Case 4.12a is same as Case 4.3.}
\label{fig:cancellations15}
\end{figure}

\begin{figure}
\sethspecial
\subfloat[Case 4.13b] {
\canceldiag{3}{7}
{\lstrand{3}{4}\lstrand{4}{5}\lstrand{6}{7}\rstrand{2}{3}}
{\lstrand{2}{3}\lstrand{3}{4}\lstrand{4}{5}\rstrand{6}{7}}
{\lstrand{3}{5}\lsinglehor{4}\lstrand{6}{7}\rstrand{2}{3}}
{\lstrand{2}{3}\lstrand{3}{5}\lsinglehor{4}\rstrand{6}{7}}
}
\resetarrows
\caption{Cancellations in $d\circ H+H\circ d=\mathbb{I}_M$. Case 4.13a is same as Case 4.4.
Case 4.13c, with the second special case, cannot occur.}
\label{fig:cancellations15a}
\end{figure}

\clearpage

\begin{figure}
\setarrows{$d$}{$d$}{$H$}{$H_{sp}$}
\subfloat[Case 6.1] {
\canceldiag{4}{7}
{\lstrand{6}{7}\rstrand{2}{5}\rdoublehor{4}{7}}
{\ldoublehor{4}{7}\rstrand{2}{5}\rstrand{6}{7}}
{\lstrand{6}{7}\rstrand{2}{4}\rstrand{4}{5}}
{\lstrand{2}{4}\rstrand{4}{5}\rstrand{6}{7}}
}
\resetarrows
\caption{Cancellations in $d\circ H+H\circ d=\mathbb{I}_M$.}
\label{fig:cancellations18}
\end{figure}

\begin{figure}
\setarrows{$d$}{$d$}{$H$}{$H_{sp}$}
\subfloat[Case 7.1] {
\canceldiag{4}{7}
{\lstrand{6}{7}\lstrand{2}{5}\rdoublehor{4}{7}}
{\lstrand{2}{5}\ldoublehor{4}{7}\rstrand{6}{7}}
{\lstrand{4}{5}\lstrand{6}{7}\rstrand{2}{4}}
{\lstrand{2}{4}\lstrand{4}{5}\rstrand{6}{7}}
}
\quad\quad
\subfloat[Case 7.2] {
\canceldiag{4}{7}
{\lstrand{2}{7}\rdoublehor{4}{7}}
{\ldoublehor{4}{7}\rstrand{2}{7}}
{\lstrand{4}{7}\rstrand{2}{4}}
{\lstrand{2}{4}\rstrand{4}{7}}
}
\resetarrows
\caption{Cancellations in $d\circ H+H\circ d=\mathbb{I}_M$.}
\label{fig:cancellations19}
\end{figure}

\clearpage

\begin{figure}
\subfloat[Case 9.1] {
\canceldiag{0}{0}
{\lstrand{2}{6}\lstrand{4}{5}\rsinglehor{3}}
{\lstrand{2}{5}\lstrand{4}{6}\rsinglehor{3}}
{\lstrand{3}{6}\lstrand{4}{5}\rstrand{2}{3}}
{\lstrand{3}{5}\lstrand{4}{6}\rstrand{2}{3}}
}
\quad\quad
\subfloat[Case 9.2] {
\canceldiag{0}{0}
{\lstrand{2}{6}\lstrand{3}{4}}
{\lstrand{2}{4}\lstrand{3}{6}}
{\lstrand{3}{4}\lstrand{5}{6}\rstrand{2}{5}}
{\lstrand{2}{4}\lstrand{5}{6}\rstrand{3}{5}}
}
\caption{Cancellations in $d\circ H+H\circ d=\mathbb{I}_M$.}
\label{fig:cancellations20}
\end{figure}

\begin{figure}
\subfloat[Case 9.2'] {
\canceldiag{0}{0}
{\lstrand{2}{6}\lstrand{3}{4}}
{\lstrand{3}{6}\lstrand{2}{4}}
{\lstrand{3}{4}\lstrand{4}{6}\rstrand{2}{4}}
{\lstrand{2}{4}\lstrand{4}{6}\rstrand{3}{4}}
}
\quad\quad
\subfloat[Case 9.3] {
\canceldiag{0}{0}
{\lstrand{2}{6}\lstrand{3}{5}}
{\lstrand{2}{5}\lstrand{3}{6}}
{\lstrand{3}{5}\lstrand{4}{6}\rstrand{2}{4}}
{\lstrand{2}{5}\lstrand{4}{6}\rstrand{3}{4}}
}
\caption{Cancellations in $d\circ H+H\circ d=\mathbb{I}_M$.}
\label{fig:cancellations21}
\end{figure}

\clearpage

\begin{figure}
\subfloat[Case 10.1] {
\canceldiag{0}{0}
{\lstrand{2}{3}\rstrand{3}{5}\rstrand{4}{6}}
{\lstrand{2}{3}\rstrand{3}{6}\rstrand{4}{5}}
{\lsinglehor{3}\rstrand{2}{5}\rstrand{4}{6}}
{\lsinglehor{3}\rstrand{2}{6}\rstrand{4}{5}}
}
\caption{Cancellations in $d\circ H+H\circ d=\mathbb{I}_M$.}
\label{fig:cancellations22}
\end{figure}

\begin{figure}
\subfloat[Case 11.1] {
\canceldiag{0}{0}
{\lstrand{3}{5}\lstrand{5}{6}\rstrand{2}{4}}
{\lstrand{2}{5}\lstrand{5}{6}\rstrand{3}{4}}
{\lstrand{3}{6}\lsinglehor{5}\rstrand{2}{4}}
{\lstrand{2}{6}\lsinglehor{5}\rstrand{3}{4}}
}
\quad\quad
\subfloat[Case 11.2] {
\canceldiag{0}{0}
{\lstrand{4}{6}\rstrand{2}{5}\rsinglehor{3}}
{\lstrand{2}{6}\rstrand{4}{5}\rsinglehor{3}}
{\lstrand{4}{6}\rstrand{2}{3}\rstrand{3}{5}}
{\lstrand{3}{6}\rstrand{2}{3}\rstrand{4}{5}}
}
\caption{Cancellations in $d\circ H+H\circ d=\mathbb{I}_M$.}
\label{fig:cancellations23}
\end{figure}

\clearpage

\begin{figure}
\subfloat[Case 11.3] {
\canceldiag{0}{0}
{\lstrand{3}{6}\rstrand{2}{5}\rsinglehor{4}}
{\lstrand{2}{6}\rstrand{3}{5}\rsinglehor{4}}
{\lstrand{3}{6}\rstrand{2}{4}\rstrand{4}{5}}
{\lstrand{2}{6}\rstrand{3}{4}\rstrand{4}{5}}
}
\quad\quad
\subfloat[Case 11.4] {
\canceldiag{0}{0}
{\lstrand{2}{3}\lstrand{4}{6}\rstrand{3}{5}}
{\lstrand{2}{3}\lstrand{3}{6}\rstrand{4}{5}}
{\lstrand{4}{6}\lsinglehor{3}\rstrand{2}{5}}
{\lstrand{2}{6}\lsinglehor{3}\rstrand{4}{5}}
}
\caption{Cancellations in $d\circ H+H\circ d=\mathbb{I}_M$.}
\label{fig:cancellations24}
\end{figure}

\begin{figure}
\subfloat[Case 11.5] {
\canceldiag{0}{0}
{\lstrand{3}{6}\rstrand{2}{4}\rsinglehor{5}}
{\lstrand{2}{6}\rstrand{3}{4}\rsinglehor{5}}
{\lstrand{5}{6}\rstrand{2}{4}\rstrand{3}{5}}
{\lstrand{5}{6}\rstrand{2}{5}\rstrand{3}{4}}
}
\caption{Cancellations in $d\circ H+H\circ d=\mathbb{I}_M$.}
\label{fig:cancellations25}
\end{figure}

%%% Local Variables: 
%%% mode: latex
%%% TeX-master: "koszul"
%%% End: 

\end{document}